\title[Schwartzman-Fried-Sullivan Theory]{A note on Schwartzman-Fried-Sullivan Theory, with an application}
\author[Hryniewicz]{Umberto L. Hryniewicz}
\address{Umberto L. Hryniewicz \\ RWTH Aachen, Jakobstrasse 2, Aachen 52064, Germany}
\email{hryniewicz@mathga.rwth-aachen.de}
\newcommand{\C}{\mathbb{C}}
\newcommand{\R}{\mathbb{R}}
\newcommand{\Z}{\mathbb{Z}}
\newcommand{\N}{\mathbb{N}}
\newcommand{\D}{\mathbb{D}}
\renewcommand{\P}{\mathscr{P}}
\newcommand{\s}{\mathscr{S}}
\newcommand{\Cc}{\mathscr{C}}
\newcommand{\Pp}{\mathscr{P}}
\newcommand{\pr}{{\rm pr}}
\theoremstyle{plain}
\newtheorem{theorem}{\sc Theorem}[section]
\newtheorem{lemma}[theorem]{\sc Lemma}
\newtheorem{corollary}[theorem]{\sc Corollary}
\theoremstyle{definition}
\newtheorem{definition}[theorem]{\sc Definition}
\theoremstyle{remark}
\newtheorem{remark}[theorem]{\sc Remark}
\begin{document}

\maketitle

\begin{abstract}
We prove a theorem on the existence of global surfaces of section with prescribed spanning orbits and homology class. This result is a modification and a refinement of a result due to Fried, recast in terms of invariant measures instead of homology directions.
\end{abstract}


\section{Introduction}

Throughout this paper we fix a smooth flow $\phi^t$ on a smooth closed oriented connected $3$-manifold~$M$. The interplay between the topology of $M$ and the dynamics of $\phi^t$ can be studied by following a program outlined in the 1950s by Schwartzman~\cite{schwartzman}. If $\mu$ is a $\phi^t$-invariant Borel probability measure then $\mu$-almost every point is recurrent and its trajectory almost closes up infinitely often. One may close such ``almost periodic'' long trajectories with short paths, and average in time the value obtained by hitting with a degree one cohomology class $y$. With the help of Ergodic Theory one can show that this procedure defines a $\mu$-integrable function; its integral will be denoted by $\mu\cdot y \in \R$ and called an {\it intersection number} for geometric reasons which will soon become clear. See section~\ref{ssec_defining_intersection_and_rotation_numbers} for precise definitions. This construction can be localized to invariant open subsets. In some sense the values of $\mu\cdot y$, for all possible $\mu$ and~$y$, give a complete portrait of the flow. This claim will be made precise in the context of a particular problem, namely that of deciding when a finite collection of periodic orbits bounds a global surface of section.

The notion of a global surface of section goes back to Poincar\'e's work on the 3-body problem. It is intimately connected to the development of Symplectic Topology: the discovery of Poincar\'e's annulus in the context of Celestial Mechanics led to the statement of his {\it last geometric theorem}~\cite{Po}, nowadays known as the Poincar\'e-Birkhoff theorem, which in turn led Arnold~\cite{arnold_conj} to make his conjectures on the number of fixed points of Hamiltonian diffeomorphisms and of Lagrangian intersections. The Arnold conjectures led to the creation of Floer theory, see for instance~\cite{floer}.

\begin{definition}\label{def_GSS}
A {\it global surface of section} for $\phi^t$ is a compact embedded surface $\Sigma \hookrightarrow M$ such that
\begin{itemize}
\item[(i)] $\partial\Sigma$ is a finite collection of periodic orbits, and $\phi^t$ is transverse to $\Sigma\setminus\partial\Sigma$.
\item[(ii)] For every $p\in M\setminus \partial\Sigma$ there exist $t_+>0$, $t_-<0$ such that $\phi^{t_+}(p)$, $\phi^{t_-}(p)$ belong to $\Sigma$.
\end{itemize}
\end{definition}

\begin{remark}
The case $\partial\Sigma$ is the empty set is not excluded. All our global surfaces of section are oriented by the ambient orientation and the co-orientation induced by the flow: intersection points of trajectories (oriented by the flow) with $\Sigma\setminus\partial\Sigma$ count $+1$.
\end{remark}

Using $\Sigma$ one may study $\phi^t$ in terms of the {\it first return map}, defined by following a point in $\Sigma\setminus\partial\Sigma$ until it hits $\Sigma$ in the future. This point of view opens the door to methods in two-dimensional dynamics.

From now on $L\subset M$ is a fixed null-homologous link consisting of periodic orbits of $\phi^t$, or the empty set. The task at hand is to look for qualitative information that will tell us when $L$ is the boundary of a global surface of section. Besides intersection numbers, other important players are the {\it rotation numbers} of components of $L$ relative to some $y\in H^1(M\setminus L;\R)$. These are roughly described as follows. The linearized flow along a periodic orbit $\gamma \subset L$ induces an orientation preserving diffeomorphism on the circle of oriented rays issuing from a point of~$\gamma$. A class $y$ determines (up to homotopy) an isotopy from the identity to this diffeomorphism. We end up with a real-valued {\it rotation number}, denoted by $\rho^y(\gamma)$. See section~\ref{ssec_defining_intersection_and_rotation_numbers} for a precise discussion.

We denote by $\Pp_\phi(M\setminus L)$ the set of $\phi^t$-invariant Borel probability measures on $M\setminus L$. If $b\in H_2(M,L;\Z)$ then $y^b$ denotes its dual class, seen in $H^1(M\setminus L;\R)$. Recall that, by definition, a Seifert surface is a compact connected orientable embedded surface. We say that $L$ spans the Seifert surface if it is equal to its boundary. Two conventions are useful: the empty set is said to bind an open book decomposition if the ambient manifold fibers over the circle, and a Seifert surface spanned by the empty set is just a closed connected orientable embedded surface.

\begin{theorem}\label{thm_SFS}
Let $b\in H_2(M,L;\Z)$ be induced by an oriented Seifert surface spanned by~$L$. Consider the following assertions.
\begin{itemize}
\item[(i)] $L$ bounds a global surface of section for $\phi^t$ representing the class~$b$.
\item[(ii)] $L$ binds an open book decomposition with connected pages that are global surfaces of section for $\phi^t$ and represent the class~$b$.
\item[(iii)] The following hold:
\begin{itemize}
\item[(a)] $\rho^{y^b}(\gamma)>0$ for every component $\gamma\subset L$.
\item[(b)] $\mu\cdot y^b>0$ for all $\mu\in\Pp_\phi(M\setminus L)$.
\end{itemize}
\end{itemize}
Then (iii) $\Rightarrow$ (ii) $\Rightarrow$ (i). Moreover (i) $\Rightarrow$ (iii) holds $C^\infty$-generically.
\end{theorem}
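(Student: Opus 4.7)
The implication (ii) $\Rightarrow$ (i) is essentially tautological: a single page of the open book decomposition, together with $L$ as boundary, is by definition a global surface of section representing~$b$. The substance lies in the other two implications.

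For (iii) $\Rightarrow$ (ii), the plan is to produce a closed 1-form $\alpha$ on $M\setminus L$ representing $y^b$ with $\alpha(X_\phi)>0$ pointwise, where $X_\phi$ is the generator of $\phi^t$, and having a prescribed local model in a tubular neighborhood of each component $\gamma\subset L$. Once $\alpha$ is in hand, the levels of a primitive on a suitable cover yield a fibration $M\setminus L\to S^1$, and together with the local model near $L$ this assembles into an open book decomposition with binding $L$ and pages representing~$b$. The construction splits in two. On a compact piece of $M$ away from a small tubular neighborhood $N(L)$, the existence of such a closed 1-form is equivalent, by a Hahn--Banach/Sullivan-type duality between invariant measures and closed 1-forms, to the positivity of $y^b$ on the cone of Schwartzman asymptotic cycles in $H_1(M\setminus N(L);\R)$, which is precisely what (b) provides. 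Near each $\gamma\subset L$, condition (a) furnishes tubular coordinates $(r,\theta,z)$ in which the flow rotates in $\theta$ at a positive asymptotic rate, so one can fix a local closed 1-form $\alpha_\gamma$ of the form $d\theta$ positive on $X_\phi$. The two models are then patched via a cutoff, with the usual care required to preserve positivity of $\alpha(X_\phi)$ in the interpolation annulus.

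For the generic direction (i) $\Rightarrow$ (iii), I would exploit the first return map. Given a global surface of section $\Sigma$ in class $b$ with $\partial\Sigma=L$, every orbit in $M\setminus L$ meets $\Sigma\setminus\partial\Sigma$ in finite positive and negative time, so the return map $\psi$ and return time $\tau$ are well defined. For any $\mu\in\Pp_\phi(M\setminus L)$, a standard disintegration expresses $\mu$ as a $\psi$-invariant measure on $\Sigma\setminus\partial\Sigma$ weighted by $\tau$, and $\mu\cdot y^b$ equals the reciprocal of the $\mu$-average return time; hence $\mu\cdot y^b\geq 0$, with strict positivity equivalent to $\tau$ being $\mu$-integrable. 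Condition (a), strict positivity of $\rho^{y^b}(\gamma)$, is essentially automatic from the transversality of $\phi^t$ to $\Sigma$ together with non-degeneracy of the orbits in $L$, which is a $C^\infty$-generic property. Condition (b) requires more: one must rule out $\phi^t$-invariant probability measures concentrated near $L$ whose supports accumulate on the binding with unbounded return time, and this is where the $C^\infty$-generic hypothesis on $\phi^t$ enters to forbid the resonant local dynamics that would allow such measures.

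The principal obstacle is the construction in (iii) $\Rightarrow$ (ii): the duality argument only produces the closed 1-form up to addition of exact 1-forms, whereas the local normal form near $L$ dictated by (a) is rigid; reconciling the two without losing pointwise positivity of $\alpha(X_\phi)$ in the transition zone is the real technical work. A secondary difficulty is formulating precisely the $C^\infty$-generic hypothesis in the converse direction so that it excludes every pathological invariant measure clustering on the binding; this most likely amounts to a Kupka--Smale-type property on the orbits of $L$ strengthened to forbid the resonances that would create zero-intersection measures.
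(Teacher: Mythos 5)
Your high-level picture is right -- produce a closed $1$-form in the class $y^b$, pointwise positive on the flow, whose fibration gives the open book -- and this is indeed what the paper does. But the way you propose to execute the two hard implications has genuine gaps.

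\textbf{On (iii) $\Rightarrow$ (ii).} Your plan splits the construction into a piece away from a tubular neighborhood $N(L)$ and a local model near $L$, and then patches. There are two problems. First, $M\setminus N(L)$ is not flow-invariant: a Sullivan/Schwartzman duality between invariant measures and closed $1$-forms is not available on a compact domain whose boundary is transverse (rather than tangent) to the flow. The paper's fix is to \emph{blow up} $L$: on the resulting manifold with boundary $D_L$, the extended vector field $X_L$ is \emph{tangent} to the boundary tori $\Sigma_j$, so the cone of structure currents, the Schwartzman cycles, and the invariant measures all live coherently on a single compact invariant domain. Second, your proposed local model near each $\gamma$ -- a closed form $d\theta$ with $d\theta(X)>0$ -- does not follow from $\rho^{y^b}(\gamma)>0$: the rotation number is an asymptotic average, so $d\theta(X)$ can change sign near $\gamma$ even when $\rho^{y^b}(\gamma)>0$. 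The paper never needs such a pointwise-positive local model; instead, it proves (Lemma~\ref{lemma_rotation_numbers}) that positive rotation number is \emph{equivalent} to positivity of $\int\beta(X_L)\,d\mu$ for invariant measures $\mu$ supported on $\Sigma_j$, which is exactly the form of statement needed to feed into the duality. Finally, you observe that duality produces the form ``up to exact $1$-forms'' and call reconciling this with the local model the real work; the paper solves this not by patching but by a single global application of a \emph{refined} Hahn--Banach theorem (Theorem~\ref{thm_refinement_HB}), which simultaneously (a) makes the resulting form agree with a prescribed closed representative of $y^b$ on the space of cycles $Z_1$ and (b) makes it strictly positive on the compact convex base $K$ of the cone of structure currents. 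You should also be aware that after producing ${\rm pr}:D_L\to\R/\Z$ there is still real work to show the pages are connected, that each page meets each $\Sigma_j$ in exactly one circle dual to $\epsilon_j\,dt/T_j$, and that the integral (not just real) class is $b$; none of this is addressed in your sketch.

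\textbf{On (i) $\Rightarrow$ (iii).} You have the logic inverted. In the paper, the generic hypothesis -- for every $j$, if $\rho_j=0$ then $\gamma_j$ is hyperbolic -- enters only to prove (iii)(a). Transversality of the flow to ${\rm int}(S)$ alone gives $\rho_j\geq 0$; passing to strict positivity requires ruling out $\rho_j=0$ via the hyperbolicity assumption, by examining how an orbit on the stable or unstable manifold of $\gamma_j$ would have to oscillate when crossing the section repeatedly, contradicting the fact that the boundary dynamics is asymptotic to a slope-$(1,0)$ loop. Once $\rho_j>0$ for all $j$, the paper shows the return time $\tau$ is \emph{uniformly bounded} (not merely $\mu$-integrable for a given $\mu$) -- this is a dynamical estimate near the binding that uses the positive rotation numbers, no further genericity -- and then $\mu\cdot y^b\geq 1/\sup\tau>0$ for \emph{every} invariant measure. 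Your claim that (a) is ``essentially automatic'' is false without the hyperbolicity assumption, and your idea that genericity is what rules out measures clustering on $L$ with unbounded return time is not how the argument goes; positivity of the rotation numbers already forces the return time to be bounded.
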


The case where $L=\emptyset$ and the homology class $b$ is not prescribed is sketched by Ghys in~\cite{ghys}. In fact, the reader will immediately realize that these notes are much inspired by, and owe a lot to, the exposition in~\cite{ghys}.

The case $L\neq\emptyset$ of Theorem~\ref{thm_SFS} is close to work of Fried. The definition of a global surface of section in~\cite[section~5]{fried} is more restrictive than Definition~\ref{def_GSS}. The difference is that in~\cite{fried} one asks for the first return time function to be bounded and the angle between the surface and the vector field not to vanish to first order as one approaches the boundary; let us call these {\it strong} global surfaces of section. It is not hard to construct by hand global surfaces section which are not strong, and the reader is invited to check that our arguments will produce strong ones from assumption (iii). In~\cite[section~5]{fried} one also finds necessary and sufficient conditions for the existence of strong global surfaces of section, but these are stated in terms of assumptions on homology directions in a certain ``blown up'' manifold; such conditions are hard to work with. Our conditions are stated in terms of rotation numbers on the ambient manifold, which are more directly related to the flow and comfortable to work with. The presentation of this new set of sufficient conditions is our first contribution. Another novelty is the identification of a rather simple $C^\infty$-generic assumption that guarantees that the sufficient conditions are also necessary, it reads: {\it For every periodic orbit $\gamma\subset L$, if the rotation number of $\gamma$ with respect to $y^b$ vanishes then $\gamma$ is hyperbolic.} The proof can be found in subsection~\ref{ssec_necessary}.

The strength of Theorem~\ref{thm_SFS} is its generality, it deals with any flow on any oriented $3$-manifold. Its weakness comes from the fact that it might not be easy to check (iii) in concrete examples. However there are restrictive but still very interesting classes of flows for which more applicable existence results can be proved. The basic example is

\begin{theorem}[Birkhoff~\cite{birkhoff}]\label{thm_birkhoff}
Let $c$ be an embedded unit speed closed geodesic on a positively curved Riemannian two-sphere. Then $\dot c \cup -\dot c$ bounds an annulus-like global surface of section for the geodesic flow on the unit tangent bundle.
\end{theorem}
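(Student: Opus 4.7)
The plan is to apply Theorem~\ref{thm_SFS} with $M = T^1 S^2$, $L = \dot c \cup -\dot c$, and $b \in H_2(T^1S^2, L; \Z)$ represented by the \emph{Birkhoff annulus}
\[
A = \{(p,v) \in T^1 S^2 : p \in c,\ v \text{ points into } \overline{D}_+\},
\]
where $D_+$ is one of the two open disks into which the embedded geodesic $c$ separates $S^2$. This $A$ is a smoothly embedded compact annulus with $\partial A = L$, and its interior is positively transverse to the geodesic flow since at any interior point $v$ fails to be tangent to $c$. It thus suffices to verify items (iii.a) and (iii.b) of the theorem for this choice of $b$.

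For condition (iii.a), trivialize a tubular neighborhood of $\dot c$ in $T^1 S^2$ by coordinates $(s, n, \theta)$, where $s$ is arclength on $c$, $n$ is the signed normal displacement on $S^2$ and $\theta$ is the angle from a parallel-transported copy of $\dot c$. Near each boundary component of $A$ the class $y^b$ is represented by a positive multiple of $d\theta$. Linearizing the geodesic flow along $\dot c$ reduces the transverse dynamics to the Jacobi equation $\ddot J + K(c(s))\, J = 0$, whose phase-plane angle rotates monotonically with sign determined by $K$; positivity of $K$ then yields $\rho^{y^b}(\dot c) > 0$, and symmetrically $\rho^{y^b}(-\dot c) > 0$.

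For condition (iii.b), note that for any $\mu \in \Pp_\phi(T^1 S^2 \setminus L)$ the intersection number $\mu \cdot y^b$ equals the asymptotic time-averaged signed count of intersections of $\mu$-generic orbits with $A$. Because $A$ is everywhere positively transverse to the flow, each such intersection contributes $+1$, so $\mu \cdot y^b \ge 0$ with equality only if $\mu$-a.e. trajectory avoids $\mathrm{int}(A)$ in both time directions. Invariance of $\mu$ rules out one-sided crossings of $c$, so such trajectories must project to geodesics on $S^2$ trapped in one of the closed disks $\overline{D}_\pm$ for all time. Positive curvature now enters decisively: if $\gamma \subset \overline{D}_+$ were a trapped geodesic distinct from $c$, its distance to $c$ would attain a positive maximum at some interior point $p_0$, where $\gamma$ would be tangent to an equidistant curve of $c$; the Riccati equation for the geodesic curvature of equidistant curves, driven by $K > 0$ from the initial value $0$ at $c$, forces that equidistant curve to bend strictly toward $c$, contradicting maximality. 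Hence $\mu \cdot y^b > 0$.

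The main obstacle is this last step, where the positive-curvature hypothesis genuinely enters. The rotation-number computation is a routine phase-plane analysis of the Jacobi equation, and the reduction of (iii.b) to the absence of trapped geodesics is formal from the positivity of the flux through $A$; ruling out such trapped geodesics is Birkhoff's original geometric input, and is where all the real work lies.
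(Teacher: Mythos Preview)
The paper does not actually prove Theorem~\ref{thm_birkhoff}; it is quoted as Birkhoff's classical result, with the remark that ``in the light of Theorem~\ref{thm_SFS} all invariant measures in the complement of $\dot c\cup-\dot c$ must intersect positively the Birkhoff annulus, and somehow this is taken care by positivity of the curvature,'' and a pointer to~\cite{HSW} for a direct argument. So there is no proof in the paper to compare against; your proposal is precisely the strategy the paper hints at but does not carry out.

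Your outline is sound, but two points deserve attention.

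\emph{First}, Theorem~\ref{thm_SFS} produces a global surface of section in the class $b$, and even an open book with connected pages in that class, but it does not tell you the page is an annulus. You either have to add a short topological step (for instance: $\pi_1(T^1S^2\setminus L)\cong\Z^2$, so the fiber of any fibration over $S^1$ has $\pi_1\cong\Z$ and is an open annulus), or---more to the point---notice that your verification of~(iii.b) already does the whole job directly. Once you know there are no trapped geodesics, every geodesic other than $c$ crosses $c$ in both directions infinitely often, hence hits $\mathrm{int}(A)$ in forward and backward time; since $\mathrm{int}(A)$ is already transverse to the flow and $\partial A=L$, the Birkhoff annulus $A$ itself is a global surface of section. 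Invoking Theorem~\ref{thm_SFS} is then a detour.

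\emph{Second}, your argument for the absence of trapped geodesics has a gap as written. You assert that the distance from a trapped geodesic $\gamma$ to $c$ attains a positive maximum at some point $p_0$; but $\gamma$ need not be periodic, so $\sup_t d(\gamma(t),c)$ may fail to be attained along $\gamma$. One fix is to pass to the $\omega$-limit set (or to an ergodic invariant measure supported on it) and find a recurrent trapped orbit where the supremum is realized in the closure; another is a direct second--variation computation showing that $t\mapsto d(\gamma(t),c)$ can have no interior local minimum when $K>0$, forcing $\gamma$ to accumulate on $c$ and leading to a contradiction via the linearized (Jacobi) flow. Either route works, but the sentence as it stands skips the step. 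You also implicitly use smoothness of $d(\cdot,c)$ at $p_0$, which fails on the cut locus of $c$; this needs a word as well. You rightly flag this step as ``where all the real work lies''---it is, and it needs a bit more than you have written.
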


The annulus in Birkhoff's theorem, sometimes called a {\it Birkhoff annulus}, is easy to describe: it consists of the unit vectors based at $c$ pointing towards one of the closed hemispheres determined by $c$. In the light of Theorem~\ref{thm_SFS} all invariant measures in the complement of $\dot c \cup -\dot c$ must intersect positively the Birkhoff annulus, and somehow this is taken care by positivity of the curvature.

There is an important point to be made here: one should work to make the general results from Schwartzman-Fried-Sullivan theory more applicable, and to make restrictive geometric results such as Birkhoff's theorem more general. Pseudo-holomorphic curve theory in symplectizations and symplectic cobordisms, as introduced by Hofer~\cite{93}, implements this program within the class of Reeb flows. The following remarkable theorem is our guide.

\begin{theorem}[Hofer, Wysocki and Zehnder~\cite{convex}]
Every smooth compact strictly convex energy level in a four-dimensional symplectic vector space admits a disk-like global surface of section.
\end{theorem}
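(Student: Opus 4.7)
I would prove this by exhibiting a distinguished simple unknotted periodic Reeb orbit $P \subset S$ and applying Theorem~\ref{thm_SFS} with $L = P$ and $b \in H_2(S, P; \Z)$ the class represented by a Seifert disk. The first step is to equip $S$ with its contact structure: the restriction $\lambda = \tfrac{1}{2}\sum_i (x_i\, dy_i - y_i\, dx_i)|_S$ is a contact form (since $S$ bounds a star-shaped domain), and its Reeb vector field is a positive reparametrization of the Hamiltonian vector field defining $\phi^t$. Thus the $\phi^t$-invariant measures and periodic orbits agree with the Reeb ones, and producing a disk-like global surface of section is equivalent, via (iii) $\Rightarrow$ (ii) of Theorem~\ref{thm_SFS}, to finding $P$ and verifying the rotation- and intersection-number conditions in its complement.

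\textbf{Binding orbit and rotation numbers.} Strict convexity provides two decisive ingredients. First, classical Rabinowitz--Weinstein existence together with a suitable selection argument produces a simple periodic Reeb orbit $P$ that is unknotted in $S \cong S^3$ and has self-linking number $-1$ with respect to a Seifert disk framing. Second, strict convexity forces every closed Reeb orbit to have Conley--Zehnder index at least $3$ with respect to its natural symplectic trivialization. Applied to the unknot $P$ with its disk trivialization, this translates into $\rho^{y^b}(P) > 1$, so condition (iii)(a) of Theorem~\ref{thm_SFS} is automatic for $P$.

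\textbf{Positivity of intersection numbers via holomorphic curves.} The substantive content is (iii)(b), which is where pseudo-holomorphic curves enter. Following Hofer's program, I would choose a $\lambda$-compatible almost complex structure $J$ on $\R \times S$ and study the moduli space $\M$ of $J$-holomorphic finite-energy planes positively asymptotic to $P$. A Bishop-style filling argument starting near a convenient elliptic orbit, combined with the Conley--Zehnder lower bound, produces a nonempty $\M$ whose elements are embedded, have Fredholm index $2$, and project to $S$ as embedded open disks transverse to the Reeb flow with boundary on $P$. SFT-compactness together with the a priori action bound (energy controlled by the period of $P$) and the index bound (ruling out lower-index breaking orbits) yields that $\M/\R$ is a compact $1$-manifold diffeomorphic to $S^1$ whose associated pages foliate $S \setminus P$. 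For any $\mu \in \Pp_\phi(S \setminus P)$, integrating against a page representing $b$ and using transversality of the pages to the flow gives $\mu \cdot y^b > 0$, so (iii)(b) holds. Theorem~\ref{thm_SFS} then delivers an open book with connected pages representing $b$, and each page is a disk, hence the desired disk-like global surface of section.

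\textbf{Main obstacle.} The hardest step is the pseudo-holomorphic curve package itself: producing the initial plane, achieving transversality, and controlling degenerations in SFT-compactness so that $\M/\R$ is truly a circle of pages rather than a space with breakings leaving gaps in $S \setminus P$. Strict convexity enters non-negotiably at each of these points, through the Conley--Zehnder lower bound (no low-index limit orbits can appear in the compactification) and the action bound (energy is controlled by the period of $P$). Without either ingredient, bubbling or breaking along additional periodic orbits would obstruct the foliation and break the link to Theorem~\ref{thm_SFS}.
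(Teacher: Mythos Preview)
The paper does not prove this theorem; it is stated with attribution to Hofer, Wysocki and Zehnder~\cite{convex} and quoted only as motivation, with no argument supplied. There is therefore no proof in the paper to compare your proposal against.

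That said, your outline has a structural redundancy worth flagging. You invoke the full pseudo-holomorphic curve package to produce a foliation of $S\setminus P$ by embedded disks transverse to the Reeb flow, and then use that foliation merely to verify condition~(iii)(b) so that Theorem~\ref{thm_SFS} can be applied. But once the foliation exists, its pages \emph{are} already global surfaces of section: transversality to the flow is given, and the return-time property follows because the pages cover $S\setminus P$. Passing through Theorem~\ref{thm_SFS} at that stage adds nothing; the holomorphic curve argument has already delivered the open book directly. This is exactly how the original proof in~\cite{convex} is organized. The point of Theorem~\ref{thm_SFS} in this paper is rather the converse direction: to obtain global surfaces of section from measure-theoretic hypotheses \emph{without} holomorphic curves, in situations where no such foliation is handed to you.

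There is also a gap in how you locate the binding orbit. Rabinowitz--Weinstein gives \emph{some} closed Reeb orbit, but not an unknotted one with self-linking number~$-1$; that is a nontrivial conclusion of the HWZ analysis, not an input to it. In~\cite{convex} the binding orbit arises as the asymptotic limit of a finite-energy plane produced by a Bishop-family/compactness argument, and its unknottedness and self-linking come from the embeddedness and positivity-of-intersections properties of that plane. Treating $P$ as given in advance and then running the holomorphic curve machinery reverses the actual logical dependence.
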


We refer to~\cite{icm} for an overview of results obtained in this direction. In~\cite{HSW} new results which can be used to prove Theorem~\ref{thm_birkhoff} quite directly will appear. The reader can find in the book~\cite{FvK_book} by Frauenfelder and van Koert a nice introduction to global surfaces of section in the context of Symplectic Dynamics, with emphasis in Celestial Mechanics. \\

\noindent {\it Acknowledgements.} I thank Alberto Abbondandolo for explaining the proof of a refinement of the Hahn-Banach theorem stated in the appendix. It plays an important role here since in Theorem~\ref{thm_SFS} one looks for global surfaces of section representing a prescribed homology class. I also thank the referee for the valuable feedback and for pointing out corrections. I acknowledge the support by the Friends of the Institute for Advanced Study for funding my research during the academic year 2018-2019 through a von Neumann Fellowship.


\section{Intersection numbers and rotation numbers}\label{ssec_defining_intersection_and_rotation_numbers}

\subsection{Intersection numbers}\label{ssec_int_numbers}

Let $\mathscr{R}_\phi$ denote the set of recurrent points of $\phi^t$.

\begin{lemma}\label{lemma_def_int_numbers}
For every $(\mu,y) \in \Pp_\phi(M\setminus L) \times H^1(M\setminus L;\R)$ there exists a function $f_{\mu,y} \in L^1(\mu)$ with the following property. For $\mu$-almost every point $p\in \mathscr{R}_\phi \setminus L$, if $V$ is a contractible open neighborhood of $p$ in $M\setminus L$, and $T_n \to +\infty$ is a sequence satisfying $\phi^{T_n}(p) \to p$, then
\begin{equation}
f_{\mu,y}(p) = \lim_{n\to\infty} \frac{1}{T_n} \left< y,k(T_n,p) \right>.
\end{equation}
Here $k(T_n,p)$ denotes any loop obtained by concatenating to $\phi^{[0,T_n]}(p)$ a path from $\phi^{T_n}(p)$ to $p$ inside~$V$.
\end{lemma}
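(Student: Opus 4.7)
I would represent $y$ by a smooth closed $1$-form $\alpha$ on $M\setminus L$, let $X$ be the generator of $\phi^t$, and take $f_{\mu,y}$ to be a Borel representative of the Birkhoff time average of $g:=\alpha(X)$. The loop integral $\langle y,k(T_n,p)\rangle$ equals $\int_{k(T_n,p)}\alpha$, which splits into an orbit part $\int_0^{T_n}g(\phi^s(p))\,ds$ plus a closing-path correction; dividing by $T_n$ and letting $n\to\infty$ should then yield the claimed identity. The one genuine obstacle is to arrange that $g$ be bounded on $M\setminus L$, without which neither $g\in L^1(\mu)$ nor the boundedness of the closing-path correction is automatic.

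\textbf{Boundedness of $g$ near $L$.} In a tubular neighborhood $N_\gamma\cong S^1\times\D^2$ of each component $\gamma\subset L$, the cohomology $H^1(N_\gamma\setminus\gamma;\R)$ is generated by $d\theta_1$ (the $S^1$ angular coordinate) and by the angular form $d\theta_2=\mathrm{Im}(dz/z)$ on the punctured disk; modifying $\alpha$ by an exact form I may assume $\alpha=a\,d\theta_1+b\,d\theta_2$ on $N_\gamma\setminus\gamma$ for suitable constants $a,b\in\R$. Since $\gamma$ is $\phi^t$-invariant, the transverse components of $X$ vanish to first order along $\gamma$, and a direct computation in these coordinates shows that both $d\theta_1(X)$ and $d\theta_2(X)$ extend continuously across $\gamma$. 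Performing this correction near every component of $L$ produces a closed $1$-form $\alpha$ representing $y$ with $g\in L^\infty(M\setminus L)\subset L^1(\mu)$, which is the key input for the rest of the argument.

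\textbf{Birkhoff average and comparison.} By Birkhoff's ergodic theorem the time average
\[
\bar g(p):=\lim_{T\to+\infty}\frac{1}{T}\int_0^T g(\phi^s(p))\,ds
\]
exists for $\mu$-a.e.\ $p\in M\setminus L$ and belongs to $L^1(\mu)$; by Poincar\'e recurrence, $\mu$-a.e.\ such $p$ also lies in $\mathscr{R}_\phi$. Set $f_{\mu,y}:=\bar g$. For such a $p$, any contractible open neighborhood $V\subset M\setminus L$ of $p$, and any sequence $T_n\to+\infty$ with $\phi^{T_n}(p)\to p$, I write $\alpha|_V=d\beta$ for a primitive on the contractible set $V$ and decompose $k(T_n,p)$ into the orbit segment $\phi^{[0,T_n]}(p)$ followed by a closing path $\sigma_n\subset V$, obtaining
\[
\langle y,k(T_n,p)\rangle=\int_0^{T_n}g(\phi^s(p))\,ds+\beta(p)-\beta(\phi^{T_n}(p)).
\]
The correction term tends to zero by continuity of $\beta$ at $p$, so dividing by $T_n$ and letting $n\to\infty$ gives the claimed identity; this also shows the limit is independent of the choice of closing path, and independence of the representative $\alpha$ within its cohomology class follows since the left-hand side depends only on $y$.
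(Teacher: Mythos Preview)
Your proof is correct and follows essentially the same route as the paper: represent $y$ by a closed $1$-form that is ``tame'' near $L$ (the paper formalizes this via the space $\Omega^1_L$ and two auxiliary lemmas), use boundedness of $\alpha(X)$ to invoke Birkhoff's ergodic theorem, and compare the loop integral with the orbit integral modulo a closing correction that is $o(T_n)$. One small inaccuracy: $d\theta_2(X)$ need not \emph{extend continuously} across $\gamma$ (its value as $r\to 0$ can depend on the angle of approach), but it is \emph{bounded}, which is all your argument actually uses; your primitive-on-$V$ trick for the closing correction is in fact slightly cleaner than the paper's bounded-length argument.
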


\begin{definition}
The above lemma allows one to use the integral 
\begin{equation}
\mu \cdot y = \int_{M\setminus L} f_{\mu,y} \  d\mu
\end{equation}
as a definition of the intersection number\footnote{But see Corollary~\ref{coro_int} for an alternative, perhaps more concrete definition.}.
\end{definition}

We now work towards the proof of Lemma~\ref{lemma_def_int_numbers}. Let $\Omega^1_L \subset \Omega^1(M\setminus L)$ be the subspace defined as follows. Choose a connected component $\gamma \subset L$ and an orientation preserving diffeomorphism
\begin{equation}\label{tubular_map}
\Psi: N \to \R/T\Z \times \D
\end{equation}
defined on some neighborhood $N$ of $\gamma$, satisfying $\Psi(\phi^t(p_0)) = (t,0)$ for some $p_0\in\gamma$. Here $\D\subset\C$ is the closed unit disk oriented by the complex orientation of $(\C,i)$, $T>0$ is the primitive period of $\gamma$, $\R/T\Z$ is oriented by the canonical orientation of the real line, and $\R/T\Z \times \D$ is oriented as a product. On $N\setminus\gamma$ we have coordinates
\begin{equation}\label{polar_tubular_coordinates}
(t,r,\theta) \in \R/T\Z \times (0,1]\times\R/2\pi\Z
\end{equation}
via the identification $\Psi^{-1}(t,re^{i\theta}) \simeq (t,r,\theta)$, which will be referred to as {\it tubular polar coordinates for $\gamma$}. Define $\Omega^1_L$ to be the set of smooth $1$-forms on $M\setminus L$ that can be represented as $Adt + Bdr + Cd\theta$ with bounded coefficients $A,B,C$ with respect to tubular polar coordinates $(t,r,\theta)$ around each connected component of~$L$.

\begin{remark}
The space $\Omega^1_L$ does not depend on choices of tubular polar coordinates.
\end{remark}

The proof of Lemma~\ref{lemma_def_int_numbers} uses the following lemmas.

\begin{lemma}\label{lemma_bounded_intersection_SFS}
If $\beta\in\Omega^1_L$ then $\beta(X)$ is bounded.
\end{lemma}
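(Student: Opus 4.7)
The plan is to reduce the boundedness of $\beta(X)$ to two regimes: a compact region away from $L$, and a tubular neighborhood of each component $\gamma\subset L$. Since $M$ is compact and $\beta$ is smooth on $M\setminus L$, on the complement of any fixed tubular neighborhood of $L$ the function $\beta(X)$ is continuous on a compact set and hence bounded. So the entire content of the lemma is the near-$L$ estimate.

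Near a component $\gamma\subset L$ I would fix a diffeomorphism $\Psi:N\to \R/T\Z\times\D$ as in \eqref{tubular_map} and work first in the smooth tubular coordinates $(t,x,y)$ on $\R/T\Z\times\D$, where $z=x+iy$. The vector field $X$ is smooth on $M$ (even across $\gamma$), and by the normalization $\Psi(\phi^t(p_0))=(t,0)$ one has $\Psi_*X|_{y=x=0}=\partial_t$. Hence $\Psi_*X = a(t,x,y)\partial_t + b(t,x,y)\partial_x + c(t,x,y)\partial_y$ with $a,b,c$ smooth and $b,c$ vanishing along $\{x=y=0\}$. Changing to polar tubular coordinates \eqref{polar_tubular_coordinates} via $\partial_x=\cos\theta\,\partial_r-r^{-1}\sin\theta\,\partial_\theta$ and $\partial_y=\sin\theta\,\partial_r+r^{-1}\cos\theta\,\partial_\theta$, I obtain
\begin{equation*}
\Psi_*X \;=\; a\,\partial_t \;+\; (b\cos\theta+c\sin\theta)\,\partial_r \;+\; \frac{-b\sin\theta+c\cos\theta}{r}\,\partial_\theta.
\end{equation*}
The coefficients of $\partial_t$ and $\partial_r$ are continuous on the closed coordinate neighborhood and therefore bounded. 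For the coefficient of $\partial_\theta$, the smoothness of $b,c$ together with $b(t,0,0)=c(t,0,0)=0$ yields $|b|+|c|=O(r)$ uniformly in $t,\theta$, so that $r^{-1}(-b\sin\theta+c\cos\theta)$ is bounded as $r\to 0^+$.

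Writing $\beta = A\,dt+B\,dr+C\,d\theta$ on $N\setminus\gamma$ with $A,B,C$ bounded by definition of $\Omega^1_L$, the pairing
\begin{equation*}
\beta(X) \;=\; Aa \;+\; B(b\cos\theta+c\sin\theta) \;+\; C\,\frac{-b\sin\theta+c\cos\theta}{r}
\end{equation*}
is then a sum of products of bounded quantities, hence bounded on $N\setminus\gamma$. Applying this argument to every component of $L$ and combining with the compactness argument on the complement finishes the proof. The only subtle point is the cancellation $b,c=O(r)$ that tames the apparent $1/r$ singularity from the polar change of coordinates; once that is in place the estimate is routine.
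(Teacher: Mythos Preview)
Your proof is correct and follows essentially the same approach as the paper: both arguments reduce to a tubular neighborhood of each component of $L$, use that the transverse components of $X$ vanish along $\gamma$ to get an $O(r)$ bound, and observe that this cancels the $1/r$ introduced by the polar change of frame so that $d\theta(X)$ stays bounded. The paper presents the computation by evaluating $dr(X)$ and $d\theta(X)$ directly rather than rewriting $\partial_x,\partial_y$ in the polar frame, but the content is identical.
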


\begin{lemma}\label{lemma_nice_representative}
Any $y\in H^1(M\setminus L;\R)$ can be represented by a closed $1$-form in $\Omega^1_L$.
\end{lemma}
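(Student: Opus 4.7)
The plan is to build the representative in two main stages: first produce a form in $\Omega^1_L$ carrying the correct meridian residues of $y$, then correct it to be closed, and finally adjust the resulting cohomology class by a smooth closed form on $M$. Fix disjoint tubular neighborhoods $N_i$ of the components $\gamma_i\subset L$ as in~\eqref{tubular_map}, with tubular polar coordinates $(t,r,\theta_i)$ as in~\eqref{polar_tubular_coordinates}. Pick strictly smaller tubes $N'_i\subset N_i$ and smooth bumps $\chi_i:M\to[0,1]$ equal to $1$ on $N'_i$ and supported in $N_i$. Let $c_i$ be the period of $y$ around a positively-oriented meridian of $\gamma_i$. The long exact sequence of the pair $(M,M\setminus L)$ combined with the Thom isomorphism $H^2(M,M\setminus L;\R)\cong H^0(L;\R)$ identifies the connecting map with $y\mapsto(c_1,\dots,c_k)$, and forces $\sum_i c_i[\gamma_i]=0$ in $H_1(M;\R)$.

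Define
\[
\omega_0:=\sum_i \frac{c_i}{2\pi}\,\chi_i\,d\theta_i,
\]
interpreted as a smooth $1$-form on $M\setminus L$ by extending each $\chi_i\,d\theta_i$ by zero outside $N_i$. On $N'_i\setminus\gamma_i$ one has $\omega_0=(c_i/2\pi)\,d\theta_i$, whose coefficients in tubular polar coordinates are constant, so $\omega_0\in\Omega^1_L$; moreover the meridian period of $\omega_0$ at $\gamma_i$ is exactly $c_i$. However $\omega_0$ is not closed: $d\omega_0=\sum_i (c_i/2\pi)\,d\chi_i\wedge d\theta_i$ is supported in the annular regions $\supp d\chi_i\subset N_i\setminus N'_i$, which avoid $L$, so $d\omega_0$ extends by zero to a smooth closed $2$-form on the closed manifold $M$.

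The main step is to show this $2$-form is exact on $M$. Pair it with an arbitrary smooth closed $1$-form $\beta$ on $M$: the identity $d\chi_i\wedge d\theta_i\wedge\beta=d(\chi_i\,d\theta_i\wedge\beta)$ is valid on $N_i\setminus\gamma_i$; applying Stokes on an annular region with outer boundary in $\{\chi_i=0\}$ (contribution $0$) and inner boundary a small torus around $\gamma_i$ where $\chi_i=1$, and letting the inner torus shrink onto $\gamma_i$, yields the residue identity
\[
\int_M d\omega_0\wedge\beta=-\sum_i c_i\int_{\gamma_i}\beta=0,
\]
the last equality by the homological relation just established. Hence $d\omega_0=d\eta$ for some $\eta\in\Omega^1(M)$; since any smooth form on $M$ restricts to $\Omega^1_L$ (a direct check in tubular polar coordinates, analogous to Lemma~\ref{lemma_bounded_intersection_SFS}), the form $\omega_0-\eta$ is a closed element of $\Omega^1_L$. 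Its meridian periods are still $c_i$, because each meridian bounds a disk in $N_i$ on which $\eta$ is smooth. To match the full cohomology class, note that $y-[\omega_0-\eta]\in H^1(M\setminus L;\R)$ has vanishing meridian periods and thus, by the same exact sequence, lies in the image of $H^1(M;\R)\to H^1(M\setminus L;\R)$; representing it by a smooth closed $\tau$ on $M$, the form $\omega_0-\eta+\tau\in\Omega^1_L$ is closed and represents $y$.

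The principal difficulty is the exactness step: since $\omega_0$ is itself singular along $L$, one cannot integrate $d(\omega_0\wedge\beta)$ on $M$ directly; the cutoffs $\chi_i$ localize the computation to annular regions where $\chi_i\,d\theta_i\wedge\beta$ serves as a legitimate primitive, reducing the problem to the residue identity whose vanishing is precisely the de Rham incarnation of the constraint $\sum_i c_i[\gamma_i]=0$ imposed on meridian periods of classes in $H^1(M\setminus L;\R)$.
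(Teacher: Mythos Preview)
Your argument is correct, but it takes a substantially more global route than the paper. The paper's proof is purely local: start with any closed representative $\beta_0$ of $y$, observe that on each punctured tube $N\setminus\gamma$ (which has the homotopy type of a $2$-torus) one can write $\beta_0 = p\,dt + q\,d\theta + df$ with constants $p,q$ and a smooth $f$, and then replace $f$ by $\rho f$ for a radial cutoff $\rho$. This changes $\beta_0$ by the exact form $d((1-\rho)f)$, so the cohomology class is preserved, and near $\gamma$ the new form equals $p\,dt+q\,d\theta$, which is manifestly in $\Omega^1_L$.

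By contrast, you build a candidate from scratch using only the meridian residues, then invoke the long exact sequence of $(M,M\setminus L)$ together with Poincar\'e duality and a residue/Stokes computation to show $d\omega_0$ is exact on $M$, and finally correct the class by a smooth form pulled back from $M$. This works, and it has the virtue of making the topological constraint $\sum_i c_i[\gamma_i]=0$ on meridian periods explicit; it would also adapt to situations where one wants to control the asymptotic form more rigidly. But for the statement at hand it is considerably more machinery than needed: the paper avoids the exact sequence, the residue identity, and the global exactness step entirely by exploiting the fact that the de~Rham cohomology of the punctured tube is already known, so any closed $1$-form there differs from one with constant coefficients by an exact piece that can simply be damped out.
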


\begin{proof}[Proof of Lemma~\ref{lemma_def_int_numbers}]

Let $\bar S_{\mu,y}$ be the set of points $p \in \mathscr{R}_\phi \setminus L$ with the following property: There exists some open contractible neighborhood $V\subset M\setminus L$ of $p$ such that for every sequence $T_n \to +\infty$ satisfying $\phi^{T_n}(p) \to p$ the limit
\begin{equation}\label{limit_natural}
\lim_{n\to+\infty} \frac{1}{T_n} \left< y,k(T_n,p) \right>
\end{equation}
exists. Here $k(T_n,p)$ denotes, for $n$ large enough, any loop obtained by concatenating to $\phi^{[0,T_n]}(p)$ a path from $\phi^{T_n}(p)$ to $p$ inside $V$. By contractibility of $V$, the above limits do not depend on choice of closing paths, but they could in principle depend on the sequence $T_n$. Clearly the existence and the values of the above limits do not depend on $V$.

By Lemma~\ref{lemma_nice_representative} we can choose a closed representative $\beta \in \Omega^1_L$ of $y$. Then $\beta(X)$ is bounded, by Lemma~\ref{lemma_bounded_intersection_SFS}. By the ergodic theorem there exists a Borel set $S\subset M\setminus L$ with the following properties: $\mu(S)=1$, and for every $p\in S$ the limit
\[
\lim_{T\to\infty} \frac{1}{T} \int_0^T \beta(X) \circ \phi^t(p) \ dt = \lim_{T\to\infty} \frac{1}{T} \int_{\phi^{[0,T]}(p)} \beta 
\]
exists, and defines a $\mu$-integrable function satisfying
\[
\int_{S} \left( \lim_{T\to\infty} \frac{1}{T} \int_{\phi^{[0,T]}(p)} \beta \right) \ d\mu = \int_{M\setminus L} \beta(X) \ d\mu.
\]

Let $p\in S \cap \mathscr{R}_\phi$, choose any contractible open neighborhood $V\subset M\setminus L$ of~$p$, and let $T_n \to +\infty$ be any sequence satisfying $\phi^{T_n}(p) \to p$. Fix any auxiliary Riemannian metric $g$ and, for $n$ large enough, consider the loop $k'(T_n,p)$ obtained by concatenating to $\phi^{[0,T_n]}(p)$ a path from $\phi^{T_n}(p)$ to $p$ inside $V$ of $g$-length not larger than $1$. Since the $g$-norm of $\beta$ is bounded near $p$, we get
\begin{equation}
\begin{aligned}
\lim_{n\to\infty} \frac{1}{T_n} \int_{\phi^{[0,T_n]}(p)} \beta &= \lim_{n\to\infty} \frac{1}{T_n} \left( \left< y,k'(T_n,p) \right> + O(1) \right) \\
& = \lim_{n\to\infty} \frac{1}{T_n} \left< y,k'(T_n,p) \right>
\end{aligned}
\end{equation}
Since $V$ is contractible, the limit obtained by replacing the loop $k'(T_n,p)$ on the right-hand side by any $k(T_n,p)$ exists, and is again equal to $\lim_{T\to\infty} \frac{1}{T} \int_{\phi^{[0,T]}(p)} \beta$.

The above argument proves two facts. The first is that $S \cap \mathscr{R}_\phi \subset \bar S_{\mu,y}$. The second is that for every $p\in S \cap \mathscr{R}_\phi$ the values of limits as in~\eqref{limit_natural} do not depend on a particular sequence~$T_n$, and they define a function $f_{\mu,y} \in L^1(\mu)$ satisfying $$ \int_{M\setminus L} f_{\mu,y} \ d\mu = \int_{M\setminus L} \beta(X) \ d\mu. $$
\end{proof}

\begin{corollary}[of the proof]\label{coro_int}
If $\beta \in \Omega^1_L$ represents $y\in H^1(M\setminus L;\R)$ and $\mu \in \Pp_{\phi}(M\setminus L)$ then $\mu \cdot y = \int_{M\setminus L} \beta(X) \ d\mu$.
\end{corollary}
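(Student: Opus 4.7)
The plan is to extract the corollary directly from the argument that proves Lemma~\ref{lemma_def_int_numbers}, supplemented by a short observation showing that $\int_{M\setminus L} \beta(X)\,d\mu$ does not depend on the choice of closed representative $\beta \in \Omega^1_L$ of $y$. The proof of Lemma~\ref{lemma_def_int_numbers} picks some closed representative $\beta\in\Omega^1_L$ and establishes the identity
\[
\int_{M\setminus L} f_{\mu,y}\,d\mu = \int_{M\setminus L} \beta(X)\,d\mu,
\]
so since by definition $\mu\cdot y = \int f_{\mu,y}\,d\mu$, the only remaining task is the invariance under change of representative.

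Suppose then that $\beta_1,\beta_2\in\Omega^1_L$ are two closed forms representing $y$. Their difference is closed and cohomologous to zero, so $\beta_1-\beta_2 = df$ for some smooth function $f\colon M\setminus L\to\R$, unique up to a locally constant function. The key step is to show that $f$ is bounded. Near any component $\gamma\subset L$, in tubular polar coordinates $(t,r,\theta)$, the assumption $df\in\Omega^1_L$ says that $\partial_tf$, $\partial_rf$, $\partial_\theta f$ are all bounded. Since $f$ is single-valued and $2\pi$-periodic in $\theta$ and $T$-periodic in $t$, boundedness of $\partial_\theta f$ and $\partial_t f$ gives uniform bounds on the oscillation of $f$ in those variables, while boundedness of $\partial_r f$ yields $|f(t,r,\theta)-f(t,r_0,\theta)|\le C|r-r_0|$. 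Combining these controls shows that $f$ is bounded on a punctured neighborhood of each component of $L$, and hence bounded on $M\setminus L$ by compactness of $M$.

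Once $f$ is known to be bounded, one concludes with Birkhoff's ergodic theorem. The function $X(f) = (\beta_1-\beta_2)(X)$ is bounded by Lemma~\ref{lemma_bounded_intersection_SFS}, hence $\mu$-integrable. For $\mu$-almost every $p\in M\setminus L$ one has
\[
\frac{1}{T}\int_0^T X(f)\circ\phi^t(p)\,dt = \frac{f(\phi^T(p)) - f(p)}{T} \longrightarrow 0 \quad \text{as } T\to+\infty,
\]
because $f$ is bounded. The ergodic theorem identifies this pointwise limit with a function whose $\mu$-integral equals $\int X(f)\,d\mu$, forcing $\int_{M\setminus L}(\beta_1-\beta_2)(X)\,d\mu = 0$. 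This yields the required independence of representative, and combined with the identity already proved inside Lemma~\ref{lemma_def_int_numbers} establishes the corollary.

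The only substantive step is the boundedness of the primitive $f$; everything else is either a direct reading of the previous proof or a standard use of Birkhoff's theorem applied to a coboundary of a bounded function. That boundedness step is essentially a coordinate calculation that exploits the $t$- and $\theta$-periodicity built into the definition of $\Omega^1_L$, so I expect no serious obstacle.
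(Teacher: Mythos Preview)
Your argument is correct, but the extra invariance step is unnecessary. The point the paper is making by labeling this ``Corollary (of the proof)'' is that the entire argument proving Lemma~\ref{lemma_def_int_numbers} applies verbatim to \emph{any} closed $\beta\in\Omega^1_L$ representing~$y$, not just to one particular choice. Indeed, the function $f_{\mu,y}$ is defined intrinsically through the limits $\lim_{n}\frac{1}{T_n}\langle y,k(T_n,p)\rangle$, which depend only on~$y$; the proof then shows, for whatever $\beta$ you hand it, that on a full-measure set this limit equals $\lim_{T}\frac{1}{T}\int_{\phi^{[0,T]}(p)}\beta$, and hence $\int f_{\mu,y}\,d\mu=\int\beta(X)\,d\mu$. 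So the identity $\mu\cdot y=\int\beta(X)\,d\mu$ for an arbitrary representative $\beta\in\Omega^1_L$ drops out directly, with no separate independence argument required.

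Your detour through boundedness of the primitive~$f$ and the ergodic theorem for the coboundary $X(f)$ is a valid alternative way to see that $\int\beta(X)\,d\mu$ is independent of the representative, and the boundedness claim is correctly justified (only the bound on $\partial_r f$ is really needed, since $f$ is automatically bounded on the compact torus $\{r=r_0\}$). But it is redundant once you notice the proof of the lemma was never tied to a specific~$\beta$ in the first place.
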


We end with proofs of lemmas~\ref{lemma_bounded_intersection_SFS} and~\ref{lemma_nice_representative}.

\begin{proof}[Proof of Lemma~\ref{lemma_bounded_intersection_SFS}]
Let $(t,x+iy) \in \R/T\Z\times \D$ be coordinates on a neighborhood of a connected component $\gamma \subset L$ given by a map as in~\eqref{tubular_map}. Let $re^{i\theta}=x+iy$ be polar coordinates on $\D$. Writing $X = X_1\partial_t + X_2\partial_x+X_3\partial_y$, observe that $X_2(t,0)=X_3(t,0)=0$, hence $|X_2|=O(r)$, $|X_3|=O(r)$ as $r\to0$. We get
\begin{equation*}
d\theta(X) = \frac{xX_3-yX_2}{r^2} = O(1) \qquad dr(X) = \frac{xX_2+yX_3}{r} = O(r)
\end{equation*}
as $r\to0$. The conclusion follows since, by assumption, $\beta = Adt + Bdr + Cd\theta$ where $A$, $B$ and $C$ are $O(1)$ as $r\to0$.
\end{proof}

\begin{proof}[Proof of Lemma~\ref{lemma_nice_representative}]
Choose any closed $1$-form $\beta_0$ in $M\setminus L$ representing $y$. In tubular polar coordinates $(t,r,\theta)$ around a connected component $\gamma$ of $L$ we can write $\beta_0$ in the frame $\{dt,dr,d\theta\}$ as $pdt + qd\theta + df$ where $p,q$ are constants depending only on $y$. Consider a smooth function $\rho:(0,1)\to[0,1]$ satisfying $\rho(s)=0$ if $s\sim0$, $\rho(s)=1$ if $s\sim1$, $\rho'$ has compact support. Consider the smooth closed $1$-form agreeing with $\beta_0$ far from $\gamma$, and with $pdt + qd\theta + d(\rho f)$ near $\gamma$. It still represents $y$ since it differs from $\beta_0$ by $d((1-\rho)f)$. Repeating this process near each connected component of $L$ we obtain the desired representative in $\Omega^1_L$.
\end{proof}

\subsection{Rotation numbers}

Fix a connected component $\gamma \subset L$, and consider the trivial vector bundle $E_\gamma = TM|_\gamma/T\gamma$ over $\gamma$. Let $T>0$ be the primitive period of $\gamma$. Coordinates $(t,x+iy=re^{i\theta})$, with $t \in \R/T\Z$ and $x+iy \in\D$, defined on a neighborhood of $\gamma$ by a diffeomorphism as in~\eqref{tubular_map} induce a trivializing frame $\{\partial_x,\partial_y\}$ on~$E_\gamma$, and a vector bundle isomorphism $E_\gamma \simeq \R/T\Z \times \R^2$. This frame induces an angular fiber coordinate on the circle bundle $(E_\gamma \setminus 0)/\R_+$ still denoted by $\theta \in \R/2\pi\Z$ with no fear of ambiguity. We end up with a bundle isomorphism $(E_\gamma\setminus 0)/\R_+ \simeq \R/T\Z \times \R/2\pi\Z$, with coordinates $(t,\theta)$. The linearized flow $d\phi^t$ on $(E_\gamma\setminus 0)/\R_+$ gets represented as the flow of a vector field of the form
\begin{equation}\label{form_of_ODE_linearized_flow}
\partial_t+b(t,\theta)\partial_\theta
\end{equation}
on this torus. Seeing $b(t,\theta)$ as a $T\Z\times 2\pi\Z$ periodic function on $\R^2$, this flow lifts to a flow on $\R^2$ of the form $t \cdot (t_0,\theta_0) \mapsto (t_0+t,\theta(t+t_0;t_0,\theta_0))$, where $\theta(t;t_0,\theta_0) \in \R$ solves the initial value problem
\begin{equation*}
\dot\theta(t;t_0,\theta_0) = b(t,\theta(t;t_0,\theta_0)) \qquad \theta(t_0;t_0,\theta_0) = \theta_0.
\end{equation*}

\begin{definition}
If $y$ is cohomologous to $pdt+qd\theta$ near $\gamma$ then we define
\begin{equation}\label{def_formula_rot_number}
\rho^y(\gamma) = \frac{T}{2\pi} \left( p+q\lim_{t\to+\infty}\frac{\theta(t;t_0,\theta_0)}{t} \right).
\end{equation}
\end{definition}

\begin{remark}
The argument in~\cite[pp. 104-105]{arnold} proves that the limit in~\eqref{def_formula_rot_number} exists, does not depend on $(t_0,\theta_0)$, and the convergence is uniform in $(t_0,\theta_0)$. 
\end{remark}

The task now is to show that $\rho^y(\gamma)$ is independent of choice of coordinates. Another such choice induces new coordinates $(t',\theta') \in \R/T\Z \times \R/2\pi\Z$ on the circle bundle $(E_\gamma\setminus 0)/\R_+$ and, moreover, $t'=t$. The degree of $$ t\in\R/T\Z \to \theta'(t,\theta) \in \R/2\pi\Z $$ is independent of $\theta$, and denoted by $m\in\Z$. Hence the class $y$ is cohomologous to $(p-2\pi mq/T)dt+qd\theta'$ near $\gamma$ and 
\[
\lim_{t\to+\infty} \frac{\theta'(t;t_0,\theta_0')}{t} = \frac{2\pi}{T} m + \lim_{t\to+\infty} \frac{\theta(t;t_0,\theta_0)}{t}.
\]
With these new choices we would then have defined $\rho^y(\gamma)$ to be 
\begin{equation*}
\begin{aligned}
& \frac{T}{2\pi} \left( p-\frac{2\pi}{T}mq + q \lim_{t\to+\infty} \frac{\theta'(t;t_0,\theta_0')}{t} \right) \\
&= \frac{T}{2\pi} \left( p-\frac{2\pi}{T}mq + q\left( \frac{2\pi}{T} m + \lim_{t\to+\infty}\frac{\theta(t;t_0,\theta_0)}{t} \right) \right) \\
&= \frac{T}{2\pi} \left( p+q\lim_{t\to+\infty}\frac{\theta(t;t_0,\theta_0)}{t} \right)
\end{aligned}
\end{equation*}
as desired.

\section{Proof of Theorem~\ref{thm_SFS}}

Note first that (ii) $\Rightarrow$ (i) follows from definitions.

\subsection{Blowing periodic orbits up}\label{ssec_blowing_up}

Enumerate the components $\gamma_1,\dots,\gamma_h$ of $L$, and denote their primitive periods by $T_j>0$. For each $j\in\{1,\dots,h\}$ choose a small neighborhood $N_j$ of $\gamma_j$ and an orientation preserving diffeomorphism
\begin{equation}\label{tubes_aligned}
\Psi_j:N_j \to \R/T_j\Z\times \D
\end{equation}
as in~\eqref{tubular_map} which is {\it aligned with $b$}, i.e. the loop $t\in \R/T_j\Z \mapsto \Psi_j(t,1)\in M\setminus L$ has zero algebraic intersection number with $b$, and for some (hence any) $t_0\in\R/T_j\Z$ the algebraic intersection number of the loop $\theta \in \R/2\pi\Z \mapsto \Psi_j(t_0,\theta) \in M\setminus L$ belongs to $\{1,-1\}$.

\begin{remark}
Such a choice is only possible since it is assumed in Theorem~\ref{thm_SFS} that $b$ comes from a Seifert surface for $L$. This is an absolutely crucial choice which the reader must keep in mind. It will not be used in this paragraph, but will play an important role in the proof of (iii) $\Rightarrow$ (ii) in Theorem~\ref{thm_SFS}.
\end{remark}

The $\Psi_j$ induce tubular polar coordinates $(t,r,\theta) \in \R/T_j\Z \times (0,1] \times \R/2\pi\Z$ around the $\gamma_j$, as explained in~\ref{ssec_int_numbers}. A smooth $3$-manifold $M_L$ can be constructed by blowing $L$ up, more precisely it is defined as
\begin{equation}\label{blown_up_manifold}
M_L := \left. \left\{ M\setminus L \ \ \sqcup \ \ \bigsqcup_{j=1}^h \R/T_j\Z \times (-\infty,1]\times \R/2\pi\Z \right\} \right/ \sim
\end{equation}
where, for each $j\in\{1,\dots,h\}$, the point $(t,r,\theta) \in \R/T_j\Z \times (0,1] \times \R/2\pi\Z$ is identified with the point $\Psi^{-1}_j(t,re^{i\theta}) \in N_j\setminus \gamma_j \subset M\setminus L$.

From now on we fix $j$, work on $N_j$, and consider $Z=(\Psi_j)_*X = Z(t,x+iy)$. Then $Z$ is a vector field on $\R/T_j\Z \times \D \subset \R/T_j\Z \times \C$, and as such it can be seen as smooth function $Z:\R/T_j\Z \times \D \to \R \times \C$. Write $x+iy=re^{i\theta}$ and consider the smooth map 
\begin{equation}\label{map_Phi}
\begin{aligned}
& \Phi: \R/T_j\Z \times[0,1]\times\R/2\pi\Z \to \R/T_j\Z \times \D \\
& \Phi(t,r,\theta) = (t,re^{i\theta})
\end{aligned}
\end{equation}
Then $\Phi^{-1}$ is well-defined and smooth on $\R/T_j\Z \times \D\setminus0$, and
\begin{equation*}
W = \Phi^*(Z|_{\R/T_j\Z\times (\D\setminus\{0\})})
\end{equation*}
is a smooth vector field on $\R/T_j\Z\times (0,1]\times\R/2\pi\Z$. We claim that there exists a smooth extension $\widehat W$ of $W$ to $\R/T_j\Z \times (-\infty,1]\times\R/2\pi\Z$ such that $\widehat W$ is tangent to the torus $\R/T_j\Z \times\{0\}\times\R/2\pi\Z$. Let us prove this. In the frame $\{\partial_t,\partial_r,\partial_\theta\}$ we have
\[
D\Phi(t,r,\theta)^{-1} = \begin{bmatrix} 1 & 0 & 0 \\ 0 & \cos\theta & \sin\theta \\ 0 & -r^{-1}\sin\theta & r^{-1}\cos\theta \end{bmatrix}
\]
Note also that $Z(t,re^{i\theta}) = \begin{pmatrix} 1 \\ 0 \end{pmatrix} + A(t,re^{i\theta})  re^{i\theta}$ where 
\[
A(t,re^{i\theta}) = \int_0^1 D_2Z(t,\tau re^{i\theta}) \ d\tau
\]
is smooth (the second component of $Z$ is a complex number). Moreover, 
\begin{equation}\label{DZ_at_orbit}
DZ(t,0) = \begin{pmatrix} 0 & A_1(t,0) \\ 0 & A_2(t,0) \end{pmatrix} \qquad \text{with} \qquad A(t,0) = \begin{pmatrix} A_1(t,0) \\ A_2(t,0) \end{pmatrix}.
\end{equation}
Hence, if we see the second component as an element of $\R^2$, we get
\begin{equation}\label{expansion_W}
\begin{aligned}
W(t,r,\theta) &= D\Phi(t,r,\theta)^{-1}  Z(t,re^{i\theta}) \\
&= \begin{pmatrix} 1 \\ 0 \end{pmatrix} + \begin{bmatrix} r & 0  \\ 0 & \begin{pmatrix} r\cos\theta & r\sin\theta \\ -\sin\theta & \cos\theta \end{pmatrix} \end{bmatrix}  A(t,re^{i\theta})  e^{i\theta}
\end{aligned}
\end{equation}
which, as the reader will immediately see, is smooth all the way up to $r=0$. Hence it has a smooth extension $\widehat W$ to $\R/T\Z \times (-\infty,1] \times \R/2\pi\Z$, and at $r=0$ this vector field has no component in $\partial_r$. In fact, we see from the above formula that
\begin{equation}\label{vector_field_on_boundary}
\widehat W(t,0,\theta) = \partial_t + b(t,\theta)\partial_\theta \qquad b(t,\theta) = \left< A_2(t,0)e^{i\theta},ie^{i\theta} \right>
\end{equation}
It follows that $X|_{M\setminus L}$ can be smoothly extended to a vector field 
\begin{equation}\label{extended_vector_field}
X_L \in \mathscr{X}(M_L)
\end{equation}
tangent to the tori
\begin{equation}\label{boundary_tori}
\Sigma_j = \R/T_j\Z\times\{0\}\times\R/2\pi\Z
\end{equation}
Of course, this extension is not unique, but the restriction of $X_L$ to the closure $D_L$ of $M\setminus L$ in $M_L$ is unique. The vector field $X$ does not vanish near $L$. In view of~\eqref{vector_field_on_boundary} the extension $X_L$ can be chosen to generate a complete flow and to have no zeros on $M_L\setminus D_L$. We will also denote the flow of $X_L$ by $\phi^t$ with no fear of ambiguity.

\begin{remark}
The linearized flow $d\phi^t|_{(0,0)} (s_0,u_0)$ along $\gamma_j$ in coordinates $(t,x+iy)$ induced by $\Psi_j$ is the solution of the initial value problem
\begin{equation*}
\begin{pmatrix} \dot s(t) \\ \dot u(t) \end{pmatrix} = DZ(t,0) \begin{pmatrix} s(t) \\ u(t) \end{pmatrix} \qquad \begin{pmatrix} s(0) \\ u(0) \end{pmatrix} = \begin{pmatrix} s_0 \\ u_0 \end{pmatrix}
\end{equation*}
From~\eqref{DZ_at_orbit} we see that $u(t)$ satisfies the linear equation $\dot u(t) = A_2(t,0) u(t)$. With the aid of $\Psi_j$ the coordinates $x+iy$ are precisely induced by the frame which is used to write down the vector field~\eqref{form_of_ODE_linearized_flow}. This shows that the matrix $M(t)$ representing the linearized flow on $E_{\gamma_j}$ satisfies $\dot M(t)=A_2(t,0)M(t)$. Thus, $M(t)u_0 = u(t)$, and in polar coordinates $u(t) = r(t)e^{i\theta(t)}$ one easily computes
\begin{equation}
\dot \theta(t) = \left< A_2(t,0)e^{i\theta(t)},ie^{i\theta(t)} \right> = b(t,\theta(t)).
\end{equation}
This gives a concrete formula for the function $b(t,\theta)$ appearing in~\eqref{form_of_ODE_linearized_flow}.
\end{remark}

\subsection{Schwartzman cycles and structure currents}

The manifold $M_L$~\eqref{blown_up_manifold} was obtained by blowing $L$ up, and the smooth domain $D_L \subset M_L$ was defined to be the closure of $M\setminus L$ in $M_L$. We see that 
\begin{equation*}
D_L = \left. \left\{ M\setminus L \ \ \sqcup \ \ \bigsqcup_{j=1}^h \R/T_j\Z \times [0,1]\times \R/2\pi\Z \right\} \right/ \sim
\end{equation*}
has boundary equal to $\partial D_L = \bigsqcup_{j=1}^h \Sigma_j$ where the $\Sigma_j$ are the tori~\eqref{boundary_tori}. The smooth vector field $X_L$~\eqref{extended_vector_field} restricts to $D_L$ as the unique continuous extension of $X$ from $M\setminus L$ to $D_L$, and it is tangent to $\partial D_L$.

In~\cite{derham} de Rham equips $\Omega^1(M_L)$ with the $C^\infty_{\rm loc}$-topology and defines a $1$-current with compact support as an element of its topological dual $C_1 = \Omega^1(M_L)'$. The space $C_1$ is equipped with its weak* topology. It is a useful fact, proved in~\cite[\S 17]{derham}, that the map $\Omega^1(M_L) \to C_1'$ (topological dual of $C_1$ with its weak* topology) given by $\omega \mapsto \left<\cdot,\omega\right>$ is a linear homeomorphism; in other words $\Omega^1(M_L)$ is reflexive.

More generally, one may consider the space $C_p$ of $p$-currents with compact support, defined as the topological dual of $\Omega^p(M_L)$ equipped with the $C^\infty_{\rm loc}$-topology. As before $C_p$ is equipped with its weak* topology. The boundary operator $$ \partial:C_{p+1} \to C_p $$ is defined as the adjoint of the exterior derivative $d:\Omega^p(M_L) \to \Omega^{p+1}(M_L)$. A current in $C_p$ is called a cycle if it is in the kernel of $\partial :C_p \to C_{p-1}$, and is called a boundary if it is in the image of $\partial: C_{p+1} \to C_p$. The space of boundaries is denoted by $B_p$, and the space of cycles by $Z_p$. It turns out that $H^p(M_L;\R) = Z_p/B_p$.

Consider the set $\P$ of positive compactly supported finite Borel measures on~$M_L$, and let
\begin{equation}\label{Schwartzman_cycles_set}
\P_{X_L}(D_L) \subset \P
\end{equation}
be the subset of those which are $X_L$-invariant probability measures supported on~$D_L$. Any $\mu\in\P$ defines a $1$-current $c_\mu \in C_1$ by the formula 
\[
\left< c_\mu,\omega \right> = \int_{M_L} \omega(X_L) \ d\mu, \qquad \omega\in\Omega^1(M_L)
\]
We follow Sullivan's notation and write $c_\mu = \int_{M_L} X_L \ d\mu$. A simple calculation shows that if $\mu \in \P$ is supported in $D_L$ then $c_\mu$ is a cycle if, and only if, $\mu$ is $X_L$-invariant. The elements of the set 
\begin{equation}
\s_X := \{ c_\mu \mid \mu\in \P_{X_L}(D_L) \} \subset Z_1
\end{equation}
will be called {\it Schwartzman cycles}.

Consider {\it Dirac currents} $\delta_p\in C_1$, $p\in M_L$, defined by $\left<\delta_p,\omega\right> = \omega(X_L)|_p \in \R$. Let $\Cc\subset C_1$ denote the closed convex cone generated by $\{\delta_p \mid p\in D_L\}$. In~\cite{sullivan} $\Cc$ is called the {\it cone of structure currents in $D_L$}.

\subsubsection{Compactness}\label{sssec_compactness}

\begin{lemma}
The following hold.
\begin{itemize}
\item[(I)] There exists $\omega \in \Omega^1(M_L) = C_1'$ such that $\left< c,\omega \right>>0$ for every $c\in \Cc\setminus\{0\}$.
\item[(II)] If $\omega$ is as in (I) then the convex set $K = \{ c\in\Cc \mid  \left< c,\omega \right>=1\}$ is compact. 
\end{itemize}
\end{lemma}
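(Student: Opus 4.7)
The plan is to let $g$ be any Riemannian metric on $M_L$ and set $\omega := g(X_L,\cdot) \in \Omega^1(M_L)$. The construction of $M_L$ in subsection~\ref{ssec_blowing_up} yields $X_L|_{\partial D_L} = \partial_t + b(t,\theta)\partial_\theta$ by~\eqref{vector_field_on_boundary}, which never vanishes on $\partial D_L$. On $D_L\setminus\partial D_L = M\setminus L$ the vector field $X_L = X$ is also nonsingular: any zero of $X$ at some $p \in M\setminus L$ would give an invariant probability measure $\delta_p^{\text{meas}} \in \Pp_\phi(M\setminus L)$ with vanishing intersection number against $y^b$, in conflict with hypothesis (iii)(b) of Theorem~\ref{thm_SFS}; one may therefore assume $X_L$ has no zeros on the compact set $D_L$, so that $|X_L|_g^2 \geq c_0^2$ on $D_L$ for some $c_0>0$.

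For part (I), every Dirac current at $p\in D_L$ satisfies $\langle\delta_p,\omega\rangle = |X_L|_g^2(p) \geq c_0^2 > 0$. Given $c\in\Cc$ written as a weak-$*$ limit of positive atomic combinations $c_n = \sum_i\lambda_i^{(n)}\delta_{p_i^{(n)}}$ with $\lambda_i^{(n)}\geq 0$ and $p_i^{(n)}\in D_L$, one has $\langle c,\omega\rangle \geq c_0^2 \limsup_n\sum_i\lambda_i^{(n)}$. If $\langle c,\omega\rangle = 0$ the total masses $\sum_i\lambda_i^{(n)}$ tend to zero, and the estimate $|\langle c_n,\alpha\rangle| \leq (\sum_i\lambda_i^{(n)})\sup_{D_L}|\alpha(X_L)|$ forces $\langle c,\alpha\rangle = 0$ for every $\alpha\in\Omega^1(M_L)$, hence $c=0$.

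For part (II), given $c\in K$ approximate it by atomic currents $c_n = c_{\mu_n}$ where $\mu_n = \sum_i\lambda_i^{(n)}\delta_{p_i^{(n)}}^{\text{meas}}$ is a positive atomic Borel measure on $D_L$, the current $c_\mu$ is defined by $\langle c_\mu,\alpha\rangle := \int_{D_L}\alpha(X_L)\,d\mu$, and $\langle c_n,\omega\rangle \to 1$. The bound $|X_L|_g^2 \geq c_0^2$ yields $\mu_n(D_L)\leq c_0^{-2}(1+o(1))$, so Banach--Alaoglu in $C^0(D_L)^*$ produces a subsequence $\mu_n \to \mu$ weak-$*$ with $\mu$ a positive Borel measure on $D_L$. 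Continuity of $\mu \mapsto c_\mu$ in the respective weak-$*$ topologies (clear since $\alpha(X_L) \in C^0(D_L)$ for each smooth $\alpha$) gives $c = c_\mu$ and $\int_{D_L}|X_L|_g^2\,d\mu = 1$. Thus $K = \{c_\mu : \mu\in A\}$ where $A := \{\mu \geq 0 : \int_{D_L}|X_L|_g^2\,d\mu = 1\}$ sits inside the weak-$*$ compact ball $\{\mu\geq 0 : \mu(D_L)\leq c_0^{-2}\}$ and is closed there, hence compact. Its continuous image $K$ is then weak-$*$ compact in $C_1$.

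The main difficulty is securing the nonvanishing of $X_L$ on $D_L$: without a uniform bound $|X_L|_g \geq c_0 > 0$ one can concentrate mass near a zero of $X_L$ and produce measures in $A$ of arbitrarily large total variation; the corresponding currents would then pair unboundedly against generic test forms and compactness of $K$ would fail. Once this nonvanishing is established, the argument reduces to Banach--Alaoglu for positive measures on the compact space $D_L$.
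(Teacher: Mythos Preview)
Your argument is correct. Both you and the paper begin by choosing $\omega$ with $\omega(X_L)>0$ pointwise on $D_L$, which tacitly requires $X_L$ to be nonvanishing on $D_L$; the paper leaves this implicit, while you supply it via hypothesis~(iii)(b) of Theorem~\ref{thm_SFS} (a forward reference, since the lemma is stated in the setup subsection before (iii) is in force, but harmless for the intended application). The genuine divergence is in part~(II): the paper stays inside $C_1$, shows that $\{\langle c,\eta\rangle : c\in K\}$ is bounded for every test form $\eta$, invokes Banach--Steinhaus to obtain equicontinuity, and then applies Banach--Alaoglu directly in $C_1$. You instead pass to the measure side, identifying $K$ with the continuous image under $\mu\mapsto c_\mu$ of a weak-$*$ compact set $A\subset C^0(D_L)^*$. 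Your route is more concrete and makes the role of the uniform mass bound transparent, effectively folding the representation lemma of~\S\ref{sssec_representing_by_measures} into the compactness argument; the paper's route is pure functional analysis in the current space and does not revisit approximation by atomic measures.

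Two small technical points. First, the weak-$*$ topology on $C_1$ is not metrizable, so the approximation of a given $c\in\Cc$ by finite atomic combinations is by a net, not a sequence; your extraction argument goes through verbatim with nets (and on the measure side the bounded sets in $C^0(D_L)^*$ are metrizable since $C^0(D_L)$ is separable, so there is no further issue). Second, to conclude $K=\{c_\mu:\mu\in A\}$ you also need the reverse inclusion, namely that $c_\mu\in\Cc$ for every positive finite Borel measure $\mu$ on $D_L$; this follows by weak-$*$ approximating $\mu$ by atomic measures, but it deserves a sentence.
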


\begin{proof}[Proof of (I)]
We first claim that for every neighborhood $\mathcal{O}$ of $0$ in $C_1$ there exists $\delta > 0$ such that if $p_1,\dots,p_N \in D_L$ and $a_1,\dots,a_N>0$ satisfy $\sum_i a_i < \delta$ then $\sum_i a_i\delta_{p_i} \in \mathcal{O}$. Here $N\geq1$ is arbitrary. In fact, by the definition of weak* topology we find $\eta_1,\dots,\eta_J \in \Omega^1(M_L)$ and $\epsilon>0$ such that 
\[
\mathcal{V} := \left\{ c\in C_1 \ \text{such that} \ \max_j |\left<c,\eta_j\right>|<\epsilon \right\} \subset \mathcal{O}.
\]
Fix $R > \max_j \|\eta_j(X_L)\|_{L^\infty(D_L)}$ and set $\delta = \epsilon/R$. Consider a finite sum $\sum_i a_i\delta_{p_i}$ where the $p_i$ are points of $D_L$ and the $a_i>0$ satisfy $\sum_ia_i<\delta$. Then
\[
\left| \left< \sum_i a_i\delta_{p_i},\eta_j \right> \right| \leq \sum_i a_i \|\eta_j(X_L)\|_{L^\infty(D_L)} < R \sum_ia_i < R\delta = \epsilon
\]
implying that $\sum_{i=1}^N a_i\delta_{p_i} \in \mathcal{V} \subset \mathcal{O}$ as desired.

Now choose $\omega\in\Omega^1(M_L)$ such that $\omega(X_L)>0$ pointwise on $D_L$, and choose $d>0$ such that $d < \omega(X_L)|_p$ for every $p \in D_L$. We claim that $\left< c,\omega \right>>0$ for every $c\in \Cc \setminus \{0\}$. In fact, fix any $c\in \Cc \setminus \{0\}$ arbitrarily. Since $C_1$ is Hausdorff, we can find $\mathcal{O}'$ neighborhood of $c$ and $\mathcal{O}$ neighborhood of $0$ such that $\mathcal{O}' \cap \mathcal{O} = \emptyset$. By what is proved above we can find $\delta>0$ such that if $\sum_{i=1}^Na_i\delta_{p_i}$ with $a_i>0$ satisfies $\sum_i a_i < \delta$ and $p_i \in D_L$ then $\sum_{i=1}^Na_i\delta_{p_i} \in \mathcal{O}$ (here $N\in\N$ is arbitrary). Hence if $\sum_{i=1}^Na_i\delta_{p_i} \in \mathcal{O}'$ satisfies $p_i \in D_L$ and $a_i>0$ for all $i$, then $\sum_ia_i \geq \delta$ and we can conclude that  
\[
\sum_{i=1}^Na_i\delta_{p_i} \in \mathcal{O}' \Rightarrow \left< \sum_{i=1}^Na_i\delta_{p_i},\omega \right> \geq d \sum_ia_i \geq d \delta.
\]
By the definition of $\Cc$ we get $\left< c,\omega \right> \geq \delta d > 0$, as desired.
\end{proof}

\begin{proof}[Proof of (II)]
Define the convex set $K = \{ c\in\Cc \mid  \left< c,\omega \right>=1\}$ where $\omega$ is a $1$-form satisfying (I). Note that $\omega(X)$ is pointwise strictly positive over $D_L$ since for all $p\in D_L$ we have $\delta_p \in \Cc \setminus\{0\}$. 

The set $K$ is closed since it is the intersection of two closed sets. If we can show that $K$ is contained on a compact subset of $C_1$ then it will follow that $K$ is compact. We claim that the set $\{\left<c,\eta\right> \mid c\in K\}\subset\R$ is bounded, for every $\eta \in \Omega^1(M_L)$. By the definition of $\Cc$ one needs only to consider the case where $c\in K$ is a finite linear combination of Dirac currents at points of $D_L$ with positive coefficients. There exists a constant $A>0$, depending on $\eta$ and $\omega$ such that $|\eta(X_L)| \leq A\omega(X_L)$ holds pointwise on the compact set $D_L$. If $c = \sum_{i=1}^N a_i \delta_{p_i} \in K$, $a_i>0$ and $p_i\in D_L$, then
\[
\left| \left< \sum_{i=1}^N a_i \delta_{p_i},\eta \right> \right| \leq \sum_{i=1}^N a_i|\eta(X_L)_{p_i}| \leq A \sum_{i=1}^N a_i \omega(X_L)_{p_i} = A \left<\sum_{i=1}^N a_i \delta_{p_i},\omega\right> = A
\]
as desired. We apply the Banach-Steinhaus Theorem~\cite[2.5]{rudin} to conclude that $K$ is an equicontinuous set of linear functionals, namely there exists a neighborhood $\mathscr{V}$ of $0$ in $\Omega^1(M_L)$ such that $$ \{ \left<c,\eta\right> \mid c\in K,\ \eta\in\mathscr{V} \} \subset [-1,1]. $$ This means that $K\subset \mathscr{K}_\mathscr{V}$ where 
\begin{equation*}
\mathscr{K}_\mathscr{V} = \{ c \in C_1 \mid \left<c,\eta\right> \in [-1,1] \ \forall \eta \in \mathscr{V} \}.
\end{equation*}
Now the Banach-Alaoglu theorem~\cite[3.15]{rudin} asserts that $\mathscr{K}_\mathscr{V}$ is compact. Compactness of $K$ follows.
\end{proof}

\subsubsection{Representation by measures}\label{sssec_representing_by_measures}

\begin{lemma}
For every $c\in\Cc$ there exists a unique finite Borel measure on $D_L$ such that $c=\int_{D_L}X_L \ d\mu$.
\end{lemma}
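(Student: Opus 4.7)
The plan is to exploit weak$^*$ compactness in the dual $M(D_L)=C(D_L)^*$, after observing that the representation is trivial for the atomic elements which generate $\Cc$. So I would first verify the claim for finite positive linear combinations of Dirac currents, then approximate a general $c\in\Cc$ by such combinations and pass to a weak$^*$ limit of the associated measures, while uniqueness will come from a density argument.

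Concretely, if $c=\sum_{i=1}^N a_i\delta_{p_i}$ with $a_i>0$ and $p_i\in D_L$, then the finite positive Borel measure $\mu_c=\sum_i a_i\delta_{p_i}$ on $D_L$ obviously satisfies $\langle c,\eta\rangle=\int_{D_L}\eta(X_L)\,d\mu_c$ for every $\eta\in\Omega^1(M_L)$. With $\omega\in\Omega^1(M_L)$ as in part (I) of the previous lemma and $d>0$ a pointwise lower bound for $\omega(X_L)$ on the compact set $D_L$, I obtain the crucial mass estimate $\mu_c(D_L)=\sum_i a_i\leq d^{-1}\langle c,\omega\rangle$, which is what will allow passage to the limit.

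For a general $c\in\Cc$, the definition of $\Cc$ as the weak$^*$-closure of the cone of finite positive combinations of Dirac currents furnishes a net $(c_\alpha)$ of such combinations with $c_\alpha\to c$ in $C_1$; then $\langle c_\alpha,\omega\rangle\to\langle c,\omega\rangle$, so the masses $\mu_{c_\alpha}(D_L)$ are eventually uniformly bounded by the previous step. Since $D_L$ is compact, $M(D_L)=C(D_L)^*$, and the Banach--Alaoglu theorem extracts a subnet $\mu_{c_\beta}\to\mu$ weak$^*$, with $\mu$ a finite positive Borel measure on $D_L$. Because every $\eta\in\Omega^1(M_L)$ restricts $\eta(X_L)$ to a continuous function on $D_L$, I can pass to the limit in $\langle c_\beta,\eta\rangle=\int_{D_L}\eta(X_L)\,d\mu_{c_\beta}$ to conclude $\langle c,\eta\rangle=\int_{D_L}\eta(X_L)\,d\mu$ for all $\eta$, i.e.\ $c=\int_{D_L}X_L\,d\mu$.

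For uniqueness I would show that $V=\{\eta(X_L)|_{D_L}\mid \eta\in\Omega^1(M_L)\}$ is uniformly dense in $C(D_L)$, which forces any two representing measures to agree. Since $\omega(X_L)\geq d$ on $D_L$, there is an open neighborhood $U\supset D_L$ on which $\omega(X_L)>d/2$; for any $g\in C^\infty(M_L)$ and a smooth cutoff $\chi$ equal to $1$ on $D_L$ and compactly supported in $U$, the form $\eta_g=\chi(g/\omega(X_L))\omega$ is smooth on $M_L$ and satisfies $\eta_g(X_L)|_{D_L}=g|_{D_L}$, so $V\supset C^\infty(M_L)|_{D_L}$, which is dense in $C(D_L)$. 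I expect this density step to be the only genuine subtlety; everything else is either a straightforward computation for atomic combinations or a standard functional-analytic compactness argument leveraging the positivity $\omega(X_L)>0$ established in part (I).
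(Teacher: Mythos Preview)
Your argument is correct and takes a genuinely different route from the paper. The paper works \emph{locally}: near each point of $D_L$ it straightens $X_L$ to $\partial_{x_1}$ in a flow box, checks that $f\mapsto\langle c,f\,dx_1\rangle$ is a positive distribution of order zero (using density of finite Dirac combinations in $\Cc$), invokes the Riesz representation theorem to get a local measure, and then patches the local measures by their uniqueness. Your approach is \emph{global}: you represent atomic currents by their obvious measures, control total mass via the positive $1$-form $\omega$, pass to a weak$^*$ limit in $C(D_L)^*$ by Banach--Alaoglu, and handle uniqueness with the clean observation that $\{\eta(X_L)|_{D_L}:\eta\in\Omega^1(M_L)\}$ already contains $C^\infty(M_L)|_{D_L}$ because one may divide by $\omega(X_L)$. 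Your route is slicker and more functional-analytic, but it leans on the auxiliary $\omega$ with $\omega(X_L)>0$ on $D_L$ (hence on $X_L$ being nowhere zero on $D_L$); the paper's flow-box argument makes the same nonvanishing assumption implicitly but does not otherwise need part~(I). Both approaches are equivalent in scope and rely on the same underlying Riesz--Markov principle, just applied at different scales.
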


\begin{proof}
Fix $c\in\Cc$. Let $q\in D_L$ and choose a coordinate system $x_1,x_2,x_3$ defined on an open relatively compact neighborhood $V$ of $q$ in $M_L$, taking values on the open set $U\subset \R^3$, such that $X_L = \partial_{x_1}$. Any $1$-form $\omega$ compactly supported in $V$ can be written as $\omega = h_1 dx_1 + h_2 dx_2 + h_3 dx_3$, and $\left<c,\omega\right> = \left<c,h_1 dx_1\right>$. This last assertion follows from the density in $\Cc$ of finite linear combinations of Dirac currents.

The map $f\mapsto \left< c,fdx_1 \right>$ defines a distribution on $U$ of order $0$; here~$f$ stands for a test function on $U$. This means that $\left< c,f_ndx_1 \right> \to 0$ holds for any sequence $f_n$ of test functions on $U$ supported on a common compact subset $F\subset U$, and satisfying $f_n\to0$ uniformly. This is true since it is clearly true for finite combinations of Dirac measures, and hence also for currents in their closure. Similarly we conclude that $\left<c,fdx_1\right>\geq 0$ holds for any non-negative test function on $U$.

We have checked that we can apply the Riesz representation theorem to find a unique positive Borel measure $\mu$ on $U$, finite on compact subsets of $U$, such that $\left<c,fdx_1\right> = \int_U f \ d\mu$ for all compactly supported continuous functions $f:U\to\R$. Pushing forward to $V$ we get a unique Borel measure on $V$, supported on $V\cap D_L$ and still denoted by $\mu$, such that $\left<c,\omega\right> = \int_V \omega(X_L) \ d\mu$ holds for all $\omega \in \Omega^1(M_L)$ which is compactly supported on $V$. The uniqueness property allows us to patch such local constructions to obtain a unique Borel measure $\mu$ on $M_L$, supported on $D_L$, satisfying
\begin{equation}
\left<c,\omega\right> = \int_{M_L} \omega(X_L) d\mu
\end{equation}
for all $\omega\in\Omega^1(M_L)$. The total $\mu$-measure of $M_L$ is finite since compact subsets have finite measure and $\mu$ is supported in $D_L$.
\end{proof}

\subsection{Proof of (iii) $\Rightarrow$ (ii)}

We only deal with the case $L\neq \emptyset$, the case $L=\emptyset$ is easier and left to the reader. Recall the chosen tubular neighborhoods $N_j$ of the connected components $\gamma_j\subset L$, used to blow $L$ up and obtain the manifold $M_L$~\eqref{blown_up_manifold} as explained in~\ref{ssec_blowing_up}. These were equipped with tubular polar coordinates $(t,r,\theta)$ in $\R/T_j\Z \times (0,1]\times \R/2\pi\Z$ on $N_j\setminus \gamma_j$ in such a way that the boundary component of $D_L$ corresponding to $\gamma_j$ is the torus $\Sigma_j = \R/T_j\Z \times 0\times \R/2\pi\Z$.

Assume that $b$ satisfies (iii) in Theorem~\ref{thm_SFS}. Since the $\Psi_j$ are aligned with $b$ as explained in~\ref{ssec_blowing_up}, we can choose a closed $1$-form $\beta\in\Omega^1(M_L)$ that represents the class $y^b$ in $M\setminus L$ and is written as
\begin{equation}\label{numbers_epsilon_j}
\beta = \frac{\epsilon_j}{2\pi} \ d\theta
\end{equation}
on the end $\R/T_j\Z \times (-\infty,0]\times \R/2\pi\Z$ corresponding to $\gamma_j$, for some $\epsilon_j \in \{1,-1\}$. In particular, the restriction of $\beta$ to $M\setminus L$ belongs to $\Omega^1_{L}$. For simplicity we may just write $y$ instead of $y^b$.

\begin{remark}\label{rmk_epsilon_j}
If we fix $t_0\in\R/T_j\Z$ and $r_0>0$ small enough then $\epsilon_j$ is the algebraic intersection number of the loop $\theta \in [0,2\pi] \mapsto (t_0,r_0,\theta)$ with the class $b$.
\end{remark}

Denote by $\P_{X_L}(D_L)$ the set of Borel probability measures on $M_L$ supported in~$D_L$ which are invariant by the flow of $X_L$. We claim that 
\begin{equation}\label{basic_assumption}
\left< c_\mu,\beta \right> > 0 \ \ \ \forall \mu\in \P_{X_L}(D_L).
\end{equation}

\begin{lemma}\label{lemma_rotation_numbers}
If $\mu\in\P_{X_L}(D_L)$ is supported in $\Sigma_j$ then $\rho^{y}(\gamma_j) = \frac{T_j}{2\pi} \int_{M_L} \beta(X_L) \ d\mu$.
\end{lemma}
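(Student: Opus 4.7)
The plan is to expand both sides into explicit expressions on the torus $\Sigma_j$ and then recognize, via Birkhoff's ergodic theorem, that a time-average of $b(t,\theta)$ coincides with the rotation limit that defines $\rho^y(\gamma_j)$.

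First I would unpack the right-hand side. By~\eqref{numbers_epsilon_j} the form $\beta$ equals $\frac{\epsilon_j}{2\pi}\,d\theta$ on a collar of $\Sigma_j$, and by~\eqref{vector_field_on_boundary} the vector field $X_L$ restricts to $\Sigma_j$ as $\partial_t + b(t,\theta)\partial_\theta$. Hence along $\Sigma_j$ one has $\beta(X_L) = \frac{\epsilon_j}{2\pi}\, b(t,\theta)$, so since $\mu$ is supported in $\Sigma_j$,
\[
\int_{M_L} \beta(X_L)\, d\mu \;=\; \frac{\epsilon_j}{2\pi} \int_{\Sigma_j} b(t,\theta)\, d\mu.
\]

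Next I would evaluate the remaining integral via Birkhoff's ergodic theorem applied to the continuous function $b$ on the compact $X_L$-invariant set $\Sigma_j$. Lifting the $\theta$-component of the trajectory starting at $(t_0,\theta_0)\in\Sigma_j$ to $\R$ produces a solution $\theta(s;t_0,\theta_0)$ of $\dot\theta = b(t_0+s,\theta)$, so the time integral telescopes as $\int_0^T b(\phi^s(t_0,\theta_0))\, ds = \theta(T;t_0,\theta_0)-\theta_0$. By the Arnold-type argument cited after~\eqref{def_formula_rot_number}, the limit $\alpha := \lim_{T\to+\infty} \theta(T;t_0,\theta_0)/T$ exists uniformly in $(t_0,\theta_0)$ and hence is a constant. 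Consequently the $\mu$-averaged Birkhoff limit equals $\alpha$, yielding $\int_{\Sigma_j} b\, d\mu = \alpha$.

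Finally I would match this with the definition~\eqref{def_formula_rot_number}. Near $\gamma_j$ the class $y$ is represented by $\frac{\epsilon_j}{2\pi}\,d\theta$, so $p=0$ and $q=\epsilon_j/(2\pi)$ in that formula, giving $\rho^y(\gamma_j) = \frac{T_j}{2\pi}\cdot\frac{\epsilon_j}{2\pi}\cdot\alpha$, which is exactly $\frac{T_j}{2\pi}\int_{M_L} \beta(X_L)\, d\mu$. There is no significant obstacle; the only point requiring care is the compatibility between the function $b$ that governs the $\theta$-dynamics of $\widehat W$ on $\Sigma_j$ and the function $b$ appearing in~\eqref{form_of_ODE_linearized_flow} in the definition of the rotation number. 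This compatibility is precisely the content of the remark following~\eqref{vector_field_on_boundary}, so once invoked the argument reduces to a direct calculation.
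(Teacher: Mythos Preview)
Your proof is correct and follows essentially the same route as the paper's: both use the telescoping identity $\int_0^T b\circ\phi^s\,ds = \theta(T)-\theta_0$ together with the uniform convergence of $\theta(T)/T$ to the constant $\alpha$, and then integrate against the invariant probability~$\mu$. The paper does not name Birkhoff's theorem explicitly, instead swapping the order of integration via Fubini and invoking invariance of $\mu$ to reach $\int_{\Sigma_j}\beta(X_L)\,d\mu$ directly, but this amounts to the same computation.
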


\begin{proof}
As in~\eqref{vector_field_on_boundary} the vector field $X_L|_{\Sigma_j}$ is written as $\partial_t + b(t,\theta)\partial_\theta$. We lift it as a periodic vector field on the universal covering $\R^2$, where $t$ and $\theta$ lift to real-valued coordinates. Let $\theta(t;t_0,\theta_0)$ be the unique solution to $\dot\theta=b(t,\theta)$ with value $\theta_0$ at time $t=t_0$. The flow of $X_L$ on $\Sigma_j$ is $\phi^t(t_0,\theta_0) = (t+t_0,\theta(t+t_0;t_0,\theta_0))$, modulo $T_j\Z\times 2\pi\Z$. It follows from the argument in~\cite[pp. 104-105]{arnold}, and from the definition of $\rho^y(\gamma_j)$, that the sequence of functions
\[
g_n(t_0,\theta_0) = \frac{T_j}{2\pi} \frac{\epsilon_j}{2\pi} \ \frac{\theta(t_0+nT_j;t_0,\theta_0)-\theta_0}{nT_j} 
\]
converges to $\rho^{y}(\gamma_j)$ uniformly in $(t_0,\theta_0) \in \R^2$ as $n\to\infty$. By periodicity of $b$, the $g_n$ descend to functions on $\Sigma_j \simeq \R/T\Z \times \{0\}\times \R/2\pi\Z \to \R$. We compute
\begin{equation*}
\begin{aligned}
\rho^y(\gamma_j) &= \int_{\Sigma_j} \rho^y(\gamma_j) \ d\mu = \lim_{n\to\infty} \int_{\Sigma_j} g_n \ d\mu \\
&= \frac{T_j}{2\pi} \lim_{n\to\infty} \int_{\Sigma_j} \frac{1}{nT_j} \int_{0}^{nT_j} \beta(X_L) \circ \phi^t \ dt \ d\mu \\
&= \frac{T_j}{2\pi} \lim_{n\to\infty} \frac{1}{nT_j} \int_{0}^{nT_j} \int_{\Sigma_j} \beta(X_L) \circ \phi^t \ d\mu \ dt \\
&= \frac{T_j}{2\pi} \lim_{n\to\infty} \frac{1}{nT_j} \int_{0}^{nT_j} \int_{\Sigma_j} \beta(X_L) \ d\mu \ dt = \frac{T_j}{2\pi} \int_{\Sigma_j} \beta(X_L) \ d\mu
\end{aligned}
\end{equation*}
\end{proof}

Let $\mu\in\P_{X_L}(D_L)$ be arbitrary. For every Borel set $E\subset M_L$ define
\begin{equation*}
\begin{aligned}
& \mu_j(E) = \mu(E\cap \Sigma_j) \\
& \dot\mu(E) = \mu(E\cap {\rm int}(D_L)) = \mu(E\cap (M\setminus L))
\end{aligned}
\end{equation*}
Then $\dot\mu$ and the $\mu_j$ are $X_L$-invariant Borel measures, and $\mu=\dot\mu+\sum_j\mu_j$. We have
\begin{equation}\label{sum_decomp_measures}
\left< c_\mu,\beta \right> = \int_{M_L} \beta(X_L) \ d\mu = \int_{M_L} \beta(X_L) \ d\dot\mu + \sum_{j=1}^h \int_{M_L} \beta(X_L) \ d\mu_j
\end{equation}
If $\mu_j(M_L) = \mu(\Sigma_j)>0$ then $\mu_j/\mu_j(M_L) \in \P_{X_L}(D_L)$ is supported in $\Sigma_j$. Lemma~\ref{lemma_rotation_numbers} and hypothesis (iii) in Theorem~\ref{thm_SFS} together give
\[
\int_{M_L} \beta(X_L) \ d\mu_j = \int_{\Sigma_j} \beta(X_L) \ d\mu_j = \frac{2\pi}{T_j} \mu_j(M_L) \rho^y(\gamma_j) > 0
\]
in this case. If $\dot\mu(M_L) = \mu(M\setminus L) >0$ then $\dot\mu/\dot\mu(M_L)$ induces an element of $\Pp_\phi(M\setminus L)$, and by (iii) and Corollary~\ref{coro_int} we have 
\[
\int_{M_L} \beta(X_L) \ d\dot\mu = \int_{M\setminus L} \beta(X) \ d\dot\mu > 0
\]
in this case. Thus each term in the sum~\eqref{sum_decomp_measures} is non-negative, and at least one term is positive since $\mu(M_L)=\mu(D_L)=1$. We proved~\eqref{basic_assumption}.

As explained in~\ref{sssec_compactness} there exists $\omega \in \Omega^1(M_L)$ such that $\left<\cdot,\omega\right> > 0$ on $\Cc\setminus\{0\}$, and $K = \{c\in\Cc \mid \left<c,\omega\right>=1 \}$ is compact and convex. Let $c\in\Cc\setminus\{0\}$ be a cycle. In~\ref{sssec_representing_by_measures} it is proved that $c=\int_{M_L}X_L \ d\nu$ for some positive finite Borel measure~$\nu$ supported on~$D_L$, and it is easy to see that $\nu$ must be $X_L$-invariant because $c$ is a cycle. In other words, $\mu := \nu/\nu(M_L) \in \P_{X_L}(D_L)$ and $c = \nu(M_L) c_\mu$ for a Schwartzman cycle $c_\mu$. From~\eqref{basic_assumption} we conclude that $\left<c,\beta\right> = \nu(M_L) \left< c_\mu,\beta \right> > 0$. In particular $\Cc \cap B_1 = \{0\}$, or equivalently $K \cap B_1 = \emptyset$, and $\beta$ evaluates positively on $K\cap Z_1$. By Theorem~\ref{thm_refinement_HB} we find $\eta \in C_1'$ that vanishes on $B_1$, is positive on $K$, and agrees with $\beta$ on $Z_1$. By reflexivity $C_1'=\Omega^1(M_L)$ we conclude that $\eta$ is a $1$-form, and as such it must be closed since it vanishes on $B_1$. Moreover, $\eta|_{M\setminus L}$ represents~$y$ since it agrees with $\beta$ on $Z_1$. Finally, note that
\begin{equation}\label{crucial_pointwise_ineq_SFS}
\eta(X_L)|_p = \left<\delta_p,\eta\right> > 0 \qquad \forall \ p \in D_L
\end{equation}
because $\left<\cdot,\eta\right>>0$ on $\Cc\setminus\{0\}$.

Consider inclusions $\iota:D_L\hookrightarrow M_L$ and $\iota_j:\Sigma_j \hookrightarrow M_L$. The periods of $\iota^*\eta$ are integers since $y$ comes from an integral class on $M\setminus L$ and the inclusion map $M\setminus L \hookrightarrow D_L$ induces isomorphism in cohomology. Using coordinates $(t,\theta)$ in $\R/T_j\Z \times 0\times \R/2\pi\Z=\Sigma_j$, then $\{dt,d\theta\}$ is a basis of $H^1(\Sigma_j;\R)$, and $\iota_{j}^*\eta$ is homologous to $\epsilon_j d\theta/2\pi$.

The set of periods of $\iota^*\eta$ is equal to the set of all integers $\Z$. This is so because the class $y$ pulls back to $\pm d\theta/2\pi$ near each $\Sigma_j$. Choose $p_0\in M\setminus L$ and define a map ${\rm pr}:D_L \to \R/\Z$ by setting ${\rm pr}(p)$ to be the integral of $\iota^*\eta$ along any path from $p_0$ to $p$ modulo $\Z$
\[
{\rm pr}(p) = \int_{p_0}^p \iota^*\eta \mod \Z
\]
The map ${\rm pr}:D_L \to \R/\Z$ is a smooth surjective submersion in view of~\eqref{crucial_pointwise_ineq_SFS}. An important property that follows from this construction is that if $c:S^1 \to M\setminus L$ is a smooth loop then
\begin{equation}\label{periods_formula}
\left<y,c_*[S^1]\right> = \text{degree of ${\rm pr}\circ c$.}
\end{equation}

The preimages ${\rm pr}^{-1}(x)$ are the leaves of a foliation of $D_L$ obtained by integrating $\ker \iota^*\eta$. Each ${\rm pr}^{-1}(x)$ is a compact embedded submanifold of $D_L$ that intersects the boundary $\partial D_L$ cleanly, since it is transverse to $X_L$ and $X_L$ is tangent to $\partial D_L$. It follows that ${\rm pr}^{-1}(x) \subset D_L$ is a smooth embedded surface transverse to $X_L$ with boundary equal to $\pr^{-1}(x)\cap \partial D_L$.

Each leaf ${\rm pr}^{-1}(x)$ can be co-oriented by the vector field $X_L$, and can also be co-oriented by pulling back the canonical orientation of $\R/\Z$ via the map ${\rm pr}$. These co-orientations coincide, basically by construction. Equation~\eqref{periods_formula} implies that each ${\rm pr}^{-1}(x)$ induces the class $b$ seen in $H_2(M,L;\R)$. Later we will be in position to show that the same is true with $\Z$ coefficients.

We need to show that each ${\rm pr}^{-1}(x) \cap M\setminus L$ is the interior of a Seifert surface for $L$ in $M$ which is a global surface of section. Firstly, note that all trajectories in $D_L$ will hit all leaves ${\rm pr}^{-1}(x)$ in finite time both in the future and in the past: this follows from compactness of $D_L$ and from~\eqref{crucial_pointwise_ineq_SFS}. In particular, the return time is finite for all trajectories in $M\setminus L$.

The next step is to study the structure of the boundary of the leaves. We claim that for all $j$ and $x$ the submanifold ${\rm pr}^{-1}(x) \cap \Sigma_j$ is a circle dual to $\epsilon_jdt/T_j$. Let $\alpha$ be a connected component of ${\rm pr}^{-1}(x) \cap \Sigma_j$ oriented as a piece of the boundary of ${\rm pr}^{-1}(x)$. Then $\alpha$ is an embedded circle in $\Sigma_j$. It is non-trivial in $H_1(\Sigma_j;\Z)$ since otherwise it would bound a disk $D \subset \Sigma_j$ with $X_L\pitchfork \partial D$, thus forcing a singularity of $X_L$ on $\Sigma_j$. Let $\{e_1,e_2\}$ be a basis in $H_1(\Sigma_j;\Z)$ dual to $\{dt/T_j,d\theta/2\pi\}$ and write $\alpha = n_1e_1+n_2e_2$ in homology. Since $\alpha$ is tangent to the kernel of $\iota_j^*\eta$ we get
\[
0 = \int_\alpha \iota_j^*\eta = \epsilon_jn_2 \qquad \Rightarrow \qquad n_1\neq0 \ \text{and} \ n_2=0.
\]
If $|n_1|>1$ then $\alpha$ would have self-intersections. Hence $n_1=\pm1$. One can use Remark~\ref{rmk_epsilon_j} to conclude that $n_1=\epsilon_j$.

So far we have proved that if ${\rm pr}^{-1}(x) \cap \Sigma_j$ is non-empty then its components are embedded circles dual to $\epsilon_j dt/T_j$. Hence to conclude our study of the boundary we need to show that ${\rm pr}^{-1}(x) \cap \Sigma_j$ is connected and non-empty. It has to be non-empty since $\Sigma_j$ is an invariant torus, hence if ${\rm pr}^{-1}(x)$ did not touch $\Sigma_j$ then trajectories in $\Sigma_j$ would not hit ${\rm pr}^{-1}(x)$, impossible. Let $s$ be the number of connected components of ${\rm pr}^{-1}(x) \cap \Sigma_j$, and consider the loop $c:\R/2\pi\Z \to \Sigma_j$ defined by $c(\theta) = (t_0,0,\theta)$ in polar coordinates near $\Sigma_j$. Then the degree $d$ of ${\rm pr} \circ c$ satisfies $|d|=s$. Using~\eqref{periods_formula} together with the fact that $y \simeq \epsilon_jd\theta/2\pi$ near $\Sigma_j$ we compute $$ d = \left<y,c_*[\R/2\pi\Z]\right> = \left<  \epsilon_jd\theta/2\pi,c_*[\R/2\pi\Z] \right> = \epsilon_j \qquad \Rightarrow \qquad s=1. $$

It only remains to be shown that leaves are connected. Fix $x\in\R/\Z$, denote the first return map ${\rm pr}^{-1}(x) \to {\rm pr}^{-1}(x)$ by $\psi$, and denote by
\begin{equation}
\Psi : D_L \setminus {\rm pr}^{-1}(x) \to {\rm pr}^{-1}(x)
\end{equation}
the map defined by requiring $\Psi(p)$ to be the first point in the future trajectory of $p \in D_L \setminus {\rm pr}^{-1}(x)$ that belongs to ${\rm pr}^{-1}(x)$. In other words, $\Psi(p)$ is the first hitting point in ${\rm pr}^{-1}(x)$. By transversality of $X_L$ to ${\rm pr}^{-1}(x)$, $\psi$ is a smooth diffeomorphism and $\Psi$ is a smooth submersion. For each connected component $Y$ of ${\rm pr}^{-1}(x)$ we set
\begin{equation*}
C_Y = \psi^{-1}(Y) \cup \Psi^{-1}(Y).
\end{equation*}
In other words, $C_Y$ is the set of points $p\in D_L$ which will first hit ${\rm pr}^{-1}(x)$ at a point of $Y$ in its future trajectory $\phi^{(0,+\infty)}(p)$.

We claim that ${\rm pr}^{-1}(x)$ has no boundaryless connected components. Our argument is indirect. Suppose that some connected component has no boundary. By compactness of ${\rm pr}^{-1}(x)$ we find finitely many boundaryless connected components $Y_1,\dots,Y_N$ of ${\rm pr}^{-1}(x)$ such that $Y_j = \psi(Y_{j-1})$ for all $2\leq j\leq N$ and $Y_1 = \psi(Y_N)$. It follows that $C_{Y_1} \cup \dots \cup C_{Y_N}$ is open and closed in $D_L$, hence it is equal to $D_L$ since $D_L$ is connected. But this would force ${\rm pr}^{-1}(x)$ to be equal to $Y_1\cup \dots\cup Y_N$ and have no boundary, contradicting our standing assumption $L\neq\emptyset$.

The previously explained analysis of the boundary of ${\rm pr}^{-1}(x)$ told us that each~$\Sigma_j$ contains exactly one connected component of the boundary of ${\rm pr}^{-1}(x)$, which is a circle dual to $\epsilon_j dt/T_j$. It follows that if $p$ is a boundary point of ${\rm pr}^{-1}(x)$ then $\psi(p)$ is in the same connected component of the boundary as $p$. Since we already proved that all connected components of ${\rm pr}^{-1}(x)$ have non-empty boundary, we can conclude that $\psi$ preserves each connected component of ${\rm pr}^{-1}(x)$. It follows that $C_Y$ is open and closed in $D_L$ for each connected component $Y\subset {\rm pr}^{-1}(x)$. By connectedness of $D_L$, each such $C_Y$ is either empty or equal to $D_L$. Hence ${\rm pr}^{-1}(x)$ is connected.

Finally, we need show that the projection of ${\rm pr}^{-1}(x)$ to $M$ induces the class $b\in H_2(M,L;\Z)$. We already know this to be true with $\R$-coefficients. We argue as follows. Consider an embedded compact oriented surface $\hat B \subset D_L$ that intersects $\partial D_L$ cleanly, satisfies $\partial\hat B = \hat B \cap \partial D_L$, and projects to $M$ as a Seifert surface that represents $b$. Note that $\eta$ is exact on $\hat B$ since $y$ algebraically counts intersections of loops with $b$. Let $g:\hat B \to \R$ be a primitive of $\eta$ on $\hat B$. We may assume that $g(z_0)=0$ for some $z_0 \in \hat B\setminus\partial\hat B$. Denote $x_0 = {\rm pr}(z_0)$. Then ${\rm pr}(z) = x_0 + g(z) \mod \Z$ for all $z\in\hat B$. Consider the maps induced by inclusions
\[
\begin{aligned}
&\hat\iota: H_2(\hat B,\partial\hat B;\Z) \to H_2(D_L,\partial D_L;\Z) \\ 
&\iota_0: H_2({\rm pr}^{-1}(x_0),\partial {\rm pr}^{-1}(x_0);\Z) \to H_2(D_L,\partial D_L;\Z)
\end{aligned}
\]
Consider also $\hat X_L$ the vector field $X_L$ normalized so that $d{\rm pr}(\hat X_L)=1$. Then we can use the flow of $\hat X_L$ to deform $\hat B$ to ${\rm pr}^{-1}(x_0)$ by flowing each point from time zero to time $-g$. 
We conclude that the image $\hat\iota[\hat B]$ of the fundamental class $[\hat B]$ belongs to the image of $\iota_0$, i.e. $\hat\iota[\hat B] = m \ \iota_0[{\rm pr}^{-1}(x_0)]$ for some $m\in \Z$. Using now that $y$ also algebraically counts intersections of loops with ${\rm pr}^{-1}(x_0)$ we get $m=1$. The desired conclusion follows.

We are done with the proof of (iii) $\Rightarrow$ (ii) in the case $L\neq\emptyset$. \\

\subsection{Proof that (i) $\Rightarrow$ (iii) holds $C^\infty$-generically}\label{ssec_necessary}

Denote by $\rho_j$ the rotation number of $\gamma_j$ with respect to $y^b$. The $C^\infty$-generic assumption needed for the proof reads as follows: {\it For every $j$, if $\rho_j=0$ then $\gamma_j$ is hyperbolic.}

Let $S$ be a global surface of section, oriented by the ambient orientation and the vector field $X$ as usual, representing the class $b\in H_2(M,L)$ and satisfying $\partial S = L$. Denote by $\iota:S\to M$ the inclusion map. In particular, every trajectory contained in $M\setminus L$ will hit ${\rm int}(S) = S\setminus \partial S = S\setminus L$ infinitely often in the future and in the past.

Let $(\tau,s) \in \R/ T_j\Z \times [0,1)$ be tubular coordinates on $S$ near $\gamma_j \subset L = \partial S$ such that $\gamma_j \simeq \R/T_j\Z \times 0$ and the map $\tau \mapsto \iota(\epsilon_j\tau,0)$ is a flow-parametrization of $\gamma_j$. Here $\epsilon_j=+1$ if the orientations of $\gamma_j$ induced by $S$ and by the flow coincide, or $\epsilon_j=-1$ otherwise. In particular, $\tau \mapsto \iota(\tau,0)$ parametrizes $\gamma_j$ as the boundary of~$S$, and the orientation of $S$ is given by $d\tau\wedge ds$. 

Choose tubular polar coordinates $(t,r,\theta)$ around $\gamma_j$ such that $\iota(\tau,s) = (\epsilon_j\tau,s,0)$. As in~\ref{ssec_blowing_up} $X = X(t,r,\theta)$ can be written in these coordinates as
\begin{equation}\label{local_rep_X}
X = \partial_t+ b(t,\theta)\partial_\theta + \epsilon(t,r,\theta) 
\end{equation}
with $|\epsilon|=O(r)$ as $r\to0$. Denoting $X = X^t\partial_t + X^r\partial_r + X^\theta\partial_\theta$ near $\gamma$, we have  
\begin{equation}\label{positive_theta_component}
\text{$\epsilon_j X^\theta(t,r,0) = \epsilon_jb(t,0) + O(r) > 0$ for $r>0$ small}
\end{equation}
by transversality of the flow with ${\rm int}(S)$. Taking the limit as $r\to0^+$ we get
\begin{equation}\label{infinitesimal_ineq}
\epsilon_jb(t,0)\geq 0, \ \ \ \forall t
\end{equation}
where $b(t,\theta)$ is the function~\eqref{vector_field_on_boundary}.

Using~\eqref{positive_theta_component} we find a smooth vector field $Y$ on $M\setminus L$ which is transverse to ${\rm int}(S)$, induces the same co-orientation of $S$ as $X$, and coincides with $\epsilon_j\partial_\theta$ near $\gamma_j$. Using the flow of $Y$ we construct a diffeomorphism between a neighborhood $V$ of ${\rm int}(S)$ in $M\setminus L$ and $(-\delta,\delta) \times {\rm int}(S)$, with $\delta>0$ small, such that 
\begin{itemize}
\item ${\rm int}(S) \simeq 0 \times {\rm int}(S)$.
\item If $h$ denotes the coordinate on $(-\delta,\delta)$ then $Y = \partial_h$ on $V$, and $h=\epsilon_j\theta$ near~$\gamma_j$ with respect to the chosen polar tubular coordinates.
\end{itemize}
Choose $\chi:(-\delta,\delta)\to[0,+\infty)$ a smooth compactly supported bump function satisfying $\chi(0)>0$ and $\int_{-\delta}^\delta \chi(h) dh = 1$ and define $\eta\in\Omega^1(M\setminus L)$ by $\eta = \chi(h)dh$ on $V$ and $\eta=0$ outside of~$V$. Clearly $\eta \in \Omega^1_L$, $\eta$ is closed since $d(\chi dh) = \chi' dh \wedge dh = 0$, $\eta$ represents $y^b$ and $\eta(X)$ is positive on ${\rm int}(S)$.

By definition $\epsilon_j\rho_j$ is equal to a positive multiple of the rotation number of the dynamics of the non-autonomous equation $\dot \theta(t) = b(t,\theta(t))$ on the circle $\R/2\pi\Z$. Hence~\eqref{infinitesimal_ineq} implies that $\rho_j\geq0$ for all $j$. Assume, by contradiction, that $\rho_j=0$. Then we find a fixed point $\theta_0$ of this dynamical system. This means that if $\theta(t;\theta_1)$ denotes the solution satisfying $\theta(0;\theta_1)=\theta_1$ then $\theta_0$ is a fixed point of the map $\theta_1 \mapsto \theta(T_j;\theta_1)$. Moreover, since $\rho_j=0$, the total variation of a lift of $\theta(t;\theta_0)$ vanishes when $t$ varies from $0$ to $T_j$. Since $\gamma_j$ is hyperbolic there will be an $X$-invariant strip contained either on the stable or on the unstable manifold of $\gamma_j$ which is asymptotic to the loop $\Gamma = (t,0,\theta(t;\theta_0))$ on the torus $\R/T_j\Z \times \{0\} \times \R/2\pi\Z$. A trajectory on this strip will hit ${\rm int}(S)$ infinitely often by the assumption that $S$ is a global surface of section. In particular, by transversality of $X$ with $S\setminus\partial S$, a lift of the $\theta$ coordinate to $\R$ would oscillate very much along this trajectory. However, since $\Gamma$ has slope $(1,0)$, this oscillation is bounded, contradiction. We conclude that $\rho_j>0$ for every $j$.

Consider the first return time function $\tau : {\rm int}(S) \to (0,+\infty)$ defined by the identity $\tau(p) = \inf \{ t>0 \mid \phi^t(p) \in S \}$. We claim that
\begin{equation}\label{tau_bounded_from_above}
\sup \{ \tau(p) \mid p \in {\rm int}(S) \} < +\infty.
\end{equation}
Fix $j$ and work near $\gamma_j$. For every $t_0 \in [0,T_j]$ let $f(t;t_0)$ denote the solution of the equation $\dot f(t) = b(t,f(t))$, $f(t_0)=0$. Then $$ \rho_j = \lim_{t\to+\infty} \frac{|f(t;t_0)|}{t} = \lim_{t\to+\infty} \frac{|f(t+t_0;t_0)|}{t} $$ with the convergence being uniform in $t_0 \in [0,T_j]$. Choose $s_0>0$ such that $|f(t_0+s_0;t_0)| > s_0\rho_j/2 \geq 3\pi$ for every $t_0 \in [0,T_j]$. Choose $r_0>0$ such that if $0<r<r_0$ then a trajectory of $\phi^t$ starting in $\{r<r_0\} \cap {\rm int}(S)$ is contained in the domain of the polar tubular coordinates up to time $s_0$. Note that in coordinates $(t,r,\theta)$ the vector field $X$ extends smoothly to the torus $\{r=0\}$ by the formula $\partial_t+b(t,\theta)\partial_\theta$. Hence we find $0<r_1<r_0$ such that if $0 < r < r_1$ then a lift $g(t)\in \R$, $t\in[0,s_0]$, of the $\theta$-component of the trajectory starting at $(t_0,r,0)$ will be close to $f(t+t_0;t_0)$ uniformly in $t\in[0,s_0]$, in fact we have an estimate $$ \sup \left\{ |g(t)-f(t+t_0;t_0)| \mid t_0 \in \R/T_j\Z, \ t\in[0,s_0] \right\} = O(r) \ \text{as} \ r\to0^+. $$ It follows that $|g(s_0)|>2\pi$ for all points in ${\rm int}(S)\cap \{r<r_1\}$. We get a bound $\tau \leq s_0$ for these points. Repeating this argument for all $j$, we conclude that $\tau$ is uniformly bounded from above on the ends of ${\rm int}(S)$. Hence~\eqref{tau_bounded_from_above} holds.

Fix $\mu \in \Pp_\phi(M\setminus L)$. By Corollary~\ref{coro_int} we know that
\begin{equation*}
\mu \cdot y^b = \int_{M\setminus L} \eta(X) \ d\mu
\end{equation*}
and by the Ergodic Theorem (together with Lemma~\ref{lemma_bounded_intersection_SFS}) we know that 
\begin{equation}
\lim_{T\to+\infty} \frac{1}{T} \int_{\phi^{[0,T]}(p)} \eta
\end{equation}
exists for $\mu$-almost every point $p$, and defines a $\mu$-integrable function with integral equal to $\mu \cdot y^b$. Set $a = \sup\{\tau(p)\mid p\in{\rm int}(S) \}$. By the construction of $\eta$ we see that 
\begin{equation*}
\begin{aligned}
\liminf_{T\to+\infty} \frac{1}{T} \int_{\phi^{[0,T]}(p)} \eta &\geq \liminf_{T\to+\infty} \frac{1}{T} \#\{0 \leq t \leq T \mid \phi^t(p) \in S\} \geq \liminf_{T\to+\infty} \frac{1}{T} \left\lfloor \frac{T}{a} \right\rfloor = \frac{1}{a}
\end{aligned}
\end{equation*}
holds for every $p \in M\setminus L$. We conclude that $\mu\cdot y^b \geq 1/a > 0$.

\appendix

\section{A small refinement of Hahn-Banach}

\begin{theorem}\label{thm_refinement_HB}
Suppose
\begin{itemize}
\item $X$ is a locally convex topological vector space over $\R$
\item $Z$ is a closed linear subspace
\item $K\subset X$ is compact and convex
\item $f:Z \to \R$ is linear and continuous, and satisfies $f|_{K \cap Z}>0$
\end{itemize}
Then there exists $F:X \to \R$ such that
\begin{itemize}
\item[(a)] $F$ is linear and continuous
\item[(b)] $F|_Z=f$ and $F|_K>0$
\end{itemize}
\end{theorem}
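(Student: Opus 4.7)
The plan is to deduce the result from the standard Hahn--Banach strict separation theorem (closed convex vs.\ disjoint compact convex in a locally convex space) applied inside the product space $X \times \R$ equipped with its product locally convex topology. The key trick is to separate the compact convex set $K' := K \times \{0\}$ from the epigraph
\[
C := \{(z, t) \in X \times \R : z \in Z, \ t \geq f(z)\}
\]
of $f$, which is closed and convex by continuity of $f$ and closedness of $Z$. The disjointness $C \cap K' = \emptyset$ is precisely where the hypothesis $f|_{K \cap Z} > 0$ enters: a common point $(z,0)$ would have $z \in K \cap Z$ and $0 \geq f(z)$, contradicting positivity (vacuously when $K \cap Z = \emptyset$).

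Strict separation will yield a continuous linear functional on $X \times \R$, which I would write as $\tilde F(x,t) = G(x) - \mu t$ for some continuous linear $G : X \to \R$ and some $\mu \in \R$, satisfying $\tilde F \leq \alpha$ on $C$ and $\tilde F \geq \beta$ on $K'$ with $\alpha < \beta$. The reason for separating the epigraph rather than the graph of $f$ is that the upward rays $\{(z, f(z) + s)\}_{s \geq 0} \subset C$ force $\mu \geq 0$ (otherwise $\tilde F$ blows up to $+\infty$ along some ray); the remaining inequality at $s=0$ gives $G - \mu f \leq \alpha$ on $Z$, and linearity of this expression on the subspace $Z$ then forces $G = \mu f$ on $Z$. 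On $K' = K \times \{0\}$ the separation translates to $G > 0$ on $K$.

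If $\mu > 0$, the desired extension is simply $F := G/\mu$. If $\mu = 0$, then $G|_Z = 0$ and $G > 0$ on $K$, so $G$ separates $Z$ from $K$; in that case I would extend $f$ to any continuous linear $\tilde f$ on $X$ using ordinary Hahn--Banach, and then take $F = \tilde f + \lambda G$ for $\lambda > 0$ large enough that $\lambda \inf_K G$ dominates $\sup_K |\tilde f|$ (both finite by compactness of $K$). The main subtlety is controlling the sign of the scalar $\mu$ produced by separation; separating the epigraph rather than the graph gives $\mu \geq 0$ for free, and the degenerate case $\mu = 0$ is dispatched by perturbing an arbitrary extension of $f$ by a large multiple of the separating functional $G$.
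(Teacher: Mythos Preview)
Your argument is correct and takes a genuinely different route from the paper's. The paper stays inside $X$: after disposing of the trivial cases $K=\emptyset$ and $f\equiv 0$, it picks $u\in Z$ with $f(u)=1$, replaces $K$ by the compact convex hull $K'$ of $K\cup\{u\}$, checks that $f>0$ on $K'\cap Z$, and then strictly separates the closed subspace $\ker f$ from $K'$ by a continuous linear $g$. Boundedness of $g$ on the subspace $\ker f$ forces $\ker f\subset\ker g$, and $g(u)>0$ then gives $g|_Z=cf$ with $c=g(u)>0$, so $F=g/c$ works. Your epigraph construction in $X\times\R$ avoids singling out the auxiliary point $u$ and is closer in spirit to convex-duality arguments; the price is the extra dimension and the residual case $\mu=0$ (which can only arise when $K\cap Z=\emptyset$, and which you handle cleanly by perturbing an arbitrary Hahn--Banach extension of $f$ by a large multiple of $G$). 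One minor omission: you should note explicitly that when $K=\emptyset$ the separation step is vacuous and the ordinary analytic Hahn--Banach theorem already gives the conclusion.
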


\begin{proof}
If $K$ is empty then this is a direct consequence of the standard Hahn-Banach theorem (analytic version). 

Assume from now on that $K\neq\emptyset$. If $f\equiv0$ then $K\cap Z = \emptyset$ and we can again apply the standard Hahn-Banach theorem (geometric version) to find a linear continuous $g:X\to\R$ and $\epsilon>0$ such that $g|_K\geq\epsilon$ and $g|_Z<\epsilon$. This forces $Z \subset \ker g$ and we are done with this case as well. We proceed assuming $f\neq0$ and $K\neq\emptyset$.

Choose $u\in Z$ such that $f(u)=1$ and consider the convex hull $K'$ of $K \cup \{u\}$. Then $K'$ is compact and convex. Note that $f$ is positive on $K'\cap Z$, in fact if $x\in K'\cap Z$ then $x = (1-t)u+ty$ for some $t\in[0,1]$ and $y\in K\cap Z$, and $f(x) = (1-t) + tf(y) > 0$ since $f(y)>0$. The standard Hahn-Banach theorem (geometric version) provides $g:X\to\R$ linear continuous and $\epsilon>0$ such that $g|_{\ker f}<\epsilon$ and $g|_{K'}\geq\epsilon$. It follows that $\ker f \subset \ker g$ and $g(u)>0$, in particular there exists $c>0$ such that $g|_Z = cf$ (take $c=g(u)$). The proof is finished if we set $F = \frac{1}{c}g$.
\end{proof}

\end{document}